\documentclass{amsart}
\usepackage[dvips]{graphicx}
\usepackage{amscd}
\usepackage{pstricks}

\theoremstyle{plain}
\newtheorem{theorem}{Theorem}
\newtheorem{proposition}{Proposition}
\newtheorem{lemma}{Lemma}
\newtheorem{definition}{Definition}

\theoremstyle{remark}
\newtheorem{remark}{Remark}

\numberwithin{equation}{section}

\DeclareMathOperator{\diag}{Diag}

\DeclareMathOperator{\rank}{Rank}

\begin{document}

\title[]{Almost reducibility of linear difference systems from a spectral point of view}
\author[]{\'Alvaro Casta\~neda}
\author[]{Gonzalo Robledo}
\address{Departamento de Matem\'aticas, Facultad de Ciencias, Universidad de
  Chile, Casilla 653, Santiago, Chile}
\email{castaneda@uchile.cl,grobledo@uchile.cl}
\thanks{This research has been partially supported by MATHAMSUD program (16-MATH-04 STADE)}
\subjclass{39A06,39A12}
\keywords{Linear Nonautonomous Difference Equations, Almost Reducibility, Sacker $\&$ Sell's Spectrum, Diagonal Significance}
\date{27 September 2016}
\begin{abstract}
We prove that, under some conditions, a linear nonautonomous difference system is Bylov's almost 
reducible to a diagonal one whose terms are contained in the Sacker and
Sell spectrum of the original system.

We also provide an example of the concept of diagonally significant 
system, recently introduced by P\"otzche. This example plays an essential role in
the demonstration of our results.
\end{abstract}

\maketitle

\section{Introduction}
Let us consider the non autonomous system of linear difference equations
\begin{equation}
\label{LinA}
x(n+1)=A(n)x(n), 
\end{equation}
where $x(n)$ is a column vector of $\mathbb{R}^{d}$ and the matrix function $n\mapsto A(n) \in \mathbb{R}^{d\times d}$ satisfies
the following properties:
\begin{itemize}
\item[\textbf{(P1)}] $A(n)$ is invertible for any $n\in \mathbb{Z}$,
\item[\textbf{(P2)}] $\sup\limits_{n\in \mathbb{Z}}||A(n)||<+\infty$ and  $\sup\limits_{n\in \mathbb{Z}}||A^{-1}(n)||<+\infty$,
\end{itemize}
where $||\cdot||$ denotes a matrix norm.

The purpose of this article is to study the contractibility or almost reducibility to a diagonal system. Namely, the $\delta$--kinematical 
similarity of (\ref{LinA}) to 
\begin{equation}
\label{LinB}
y(n+1)=U(n)y(n), 
\end{equation}
where $U(n)=U_{D}(n)\{I+U_{R}(n)\}$, where $U_{D}(n)$ is a diagonal matrix
and $U_{R}(n)$ has some smallness properties which will be explained later. 

\begin{definition}[\cite{Gohberg}]
\label{KS}
The system (\ref{LinA}) is kinematically similar (resp. $\delta$--kinematically similar with a fixed $\delta>0$) to (\ref{LinB}) if there exists an 
invertible transformation $F(n)$ (resp. $F(\delta,n)$)  verifying 
$$
\sup\limits_{n\in \mathbb{Z}}||F(n)||<+\infty \quad \textnormal{and} \quad 
\sup\limits_{n\in \mathbb{Z}}||F^{-1}(n)||<+\infty
$$ 
or respectively
$$
\sup\limits_{n\in \mathbb{Z}}||F(\delta,n)||<+\infty \quad \textnormal{and} \quad 
\sup\limits_{n\in \mathbb{Z}}||F^{-1}(\delta,n)||<+\infty,
$$ 
such that the change of coordinates $y(n)=F^{-1}(n)x(n)$ (resp.  $y(n)=F^{-1}(\delta,n)x(n)$) transforms (\ref{LinA}) into (\ref{LinB}).
\end{definition}

The concept of almost reducibility was introduced by Bylov \cite{Bylov} in the continuous context 
and the following definition is a discrete version.
\begin{definition}
\label{almost-reducibility}
The system (\ref{LinA}) is almost reducible to 
\begin{displaymath}
y(n+1)=V(n)y(n),
\end{displaymath}
if for any $\delta>0$, is $\delta$--kinematically similar to
\begin{displaymath}
y(n+1)=V(n)\{I+B(n)\}y(n),
\end{displaymath}
with 
\begin{displaymath}
||B(n)||\leq \delta \quad \textnormal{for any $n\in \mathbb{Z}$}.
\end{displaymath}
\end{definition}

In the case when $V(n)$ is a diagonal matrix it is said that (\ref{LinA}) is almost reducible to a diagonal system and
it was proved in \cite{Bylov} that any continuous linear system satisfy this property and the components
of $V(n)$ are real numbers.

The concept of almost reducibility to a diagonal system was rediscovered and improved by F. Lin in \cite{Lin},
who introduces the concept of contractibility. In this paper, we are introducing its discrete version.
\begin{definition}
\label{def-lin}
The system (\ref{LinA}) is contracted to the compact subset 
$E\subset \mathbb{R}^{+}$ if is almost reducible to a diagonal system
 \begin{displaymath}
 y(n+1)= \diag(C_1(n), \ldots, C_d(n))y(n),
 \end{displaymath}
where $C_{i}(n)\in E$ for any $n\in \mathbb{Z}$.
\end{definition}

It is worth to emphasize that while Bylov's result only says that the diagonal components are real numbers, Lin
provides explicit localization properties. This arises the following definition.
\begin{definition}
\label{contractibilidad}
The compact set $E\subset \mathbb{R}^{+}$ is said to be the contractible set of (\ref{LinA}) if
$E$ is the minimal compact set such that the system (\ref{LinA}) can be contracted.
\end{definition}

In the continuous case, the concept of contractibility has been applied in some results of topological equivalence \cite{Lin2}. 
The major contribution of Lin's article \cite{Lin} is to prove that the Sacker and Sell spectrum of $A(n)$ 
(a formal definition will be given later) is the contractible set of (\ref{LinA}). To the best of our knowledge, there are 
no results in the discrete case and the purpose of this article is to obtain conditions for the 
contractibility of (\ref{LinA}) by following some lines of Lin's work.

%%% Remark: MENCIONAR OTROS ENFOQUES

\subsection{Notation and terminology} The fundamental matrix of (\ref{LinA}) is defined by
\begin{equation}
\label{FM} 
X(n)=\left\{\begin{array}{rcl}
              A(n-1)\ldots A(0) & \textnormal{if} & n\geq 1\\\\
                I &\textnormal{if} & n=0\\\\
              A^{-1}(n)\ldots A^{-1}(-1) & \textnormal{if} & n\leq -1.
             \end{array}\right.
\end{equation}

The transition matrix $X(n,k)=X(n)X^{-1}(k)$ is defined as follows:
\begin{equation}
\label{TM} 
X(n,k)=\left\{\begin{array}{rcl}
              A(n-1)\ldots A(k) & \textnormal{if} & n>k\\\\
                I &\textnormal{if} & n=k\\\\
              A^{-1}(n)\ldots A^{-1}(k-1) & \textnormal{if} & n< k.
             \end{array}\right.
\end{equation}

Vector and matrix norms will be respectively denoted by $|\cdot|$ and $||\cdot||$. As usual
the infinite norm $||\cdot||_{\infty}$ is
\begin{displaymath}
||A||_{\infty}=\max\limits_{1\leq i \leq d}\left(\sum\limits_{j=1}^{d}|a_{ij}|\right).
\end{displaymath}

We will also assume the following convention for sums and products \cite[Pag.3]{Elaydi}:
\begin{displaymath}
\sum\limits_{j=n_{0}}^{n_{0}-1}a_{j}=0  \quad \textnormal{and} \quad  \prod\limits_{j=n_{0}}^{n_{0}-1}a_{j}=1. 
\end{displaymath}

\subsection{Outline} The section 2 is devoted to recall the spectral theory for linear difference systems
and state the main results. The Section 3 introduces some technical preparatory results. Furthermore, Proposition
\ref{roughness1} will provide an example of diagonal significance in the P\"otzsche's sense. The main results are proved
in the Section 4.

\section{Main result: almost reducibility to diagonal systems and spectral theory}

\subsection{Dichotomy and Sacker $\&$ Sell's Spectrum}
\begin{definition}[\cite{Papas-Schi},\cite{Papaschinopoulos-1990},\cite{Pinto},\cite{Potzsche}]
\label{ED}
The system (\ref{LinA}) has an exponential dichotomy on $\mathbb{Z}$ if
there exist numbers $K\geq 1$, $\rho\in (0,1)$ and a projector $P^{2}=P$ such that
\begin{equation}
\left\{\begin{array}{rcl}
        ||X(n)PX(k)^{-1}||\leq K\rho^{n-k} & \textnormal{if} & n\geq k,\\\\
        ||X(n)(I-P)X(k)^{-1}||\leq K\rho^{k-n} & \textnormal{if} & n \leq k.
       \end{array}\right.
\end{equation}
\end{definition}

\begin{definition}
\label{SSS}
The Sacker--Sell spectrum (also called exponential dichotomy spectrum) of (\ref{LinA}) is the set $\Sigma(A)$ of $\lambda>0$ such that
the systems
\begin{equation}
\label{pond-lamb}
x(n+1)=\lambda^{-1}A(n)x(n)
\end{equation}
have not an exponential dichotomy on $\mathbb{Z}$.
\end{definition}

\begin{remark}
\label{otra-MF}
The fundamental
matrix of (\ref{pond-lamb}) is $X_{\lambda}(n)=\lambda^{-n}X(n)$. Now,
if $\lambda \notin \Sigma(A)$, then there exist numbers $K\geq 1$, $\theta\in (0,1)$ and a 
projector $P_{\lambda}^{2}=P_{\lambda}$ such that $X_{\lambda}(n)$ satisfies
\begin{equation}
\label{A+}
\left\{\begin{array}{rcl}
        ||X(n)\lambda^{-n}P_{\lambda}X(k)^{-1}\lambda^{k}||\leq K\theta^{n-k} & \textnormal{if} & n\geq k,\\\\
        ||X(n)\lambda^{-n}(I-P_{\lambda})X(k)^{-1}\lambda^{k}||\leq K\theta^{k-n} & \textnormal{if} & n\leq k.
       \end{array}\right.
\end{equation}
\end{remark}

\begin{proposition}[Spectral Theorem]
\label{Compactness-SSS}
The Sacker--Sell spectrum $\Sigma(A)$ of (\ref{LinA}) when $||A(n)||<+\infty$ for any $n\in \mathbb{Z}$ is
either $\Sigma(A)=\emptyset$ or the union of $\ell$ closed intervals
(called spectral intervals) where $0< \ell \leq d$:
\begin{equation}
\label{espectro}
\Sigma(A)=[a_{1},b_{1}]
                   \cup [a_{2},b_{2}] \cup \cdots \cup [a_{\ell-1},b_{\ell-1}]\cup 
                  [a_{\ell},b_{\ell}],              
\end{equation}
where $0<a_{1}\leq b_{1}<a_{2}\leq b_{2}<\cdots <a_{\ell}\leq b_{\ell}$.
\end{proposition}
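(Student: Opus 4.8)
The plan is to recover the interval structure of $\Sigma(A)$ from the way the invariant projector of the exponential dichotomy of \eqref{pond-lamb} depends on the weight $\lambda$, in the spirit of the classical Sacker--Sell construction carried over to the discrete line. First I would localize the spectrum. Putting $M=\max\{\sup_{n\in\mathbb{Z}}||A(n)||,\ \sup_{n\in\mathbb{Z}}||A^{-1}(n)||\}$, which is finite by \textbf{(P2)}, one gets $||X(n,k)||\le M^{|n-k|}$ for all $n,k\in\mathbb{Z}$; hence for $\lambda>M$ the system \eqref{pond-lamb} has an exponential dichotomy with projector $P_\lambda=I$ and for $\lambda<1/M$ it has one with $P_\lambda=0$, so that $\Sigma(A)\subseteq[1/M,M]$ and the resolvent set $\rho(A):=(0,+\infty)\setminus\Sigma(A)$ contains $(0,1/M)\cup(M,+\infty)$. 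Then the roughness of exponential dichotomy (a standard fact; see \cite{Potzsche} or the preparatory estimates of Section 3) yields that $\rho(A)$ is open and that $\lambda\mapsto P_\lambda$ varies continuously on $\rho(A)$. Consequently $\Sigma(A)$ is closed, hence compact, and $r(\lambda):=\rank P_\lambda$, being integer valued and locally constant, is constant on each connected component of $\rho(A)$.

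The core of the argument is the monotone dependence of $P_\lambda$ on $\lambda$. Exploiting the estimates \eqref{A+} one identifies, for $\lambda\in\rho(A)$, the range and kernel of $P_\lambda$ dynamically as
\[
\range P_\lambda=\{\xi\in\mathbb{R}^{d}:\ \sup_{n\ge 0}\lambda^{-n}|X(n)\xi|<+\infty\},\qquad
\ker P_\lambda=\{\xi\in\mathbb{R}^{d}:\ \sup_{n\le 0}\lambda^{-n}|X(n)\xi|<+\infty\};
\]
in particular $P_\lambda$ is uniquely determined, $\range P_{\lambda_1}\subseteq\range P_{\lambda_2}$ and $\ker P_{\lambda_1}\supseteq\ker P_{\lambda_2}$ whenever $\lambda_1\le\lambda_2$ both lie in $\rho(A)$, and therefore $r$ is non-decreasing, equal to $0$ on $(0,1/M)$ and to $d$ on $(M,+\infty)$. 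A short interpolation remark completes the picture: if $\lambda_1<\lambda_2$ satisfy $P_{\lambda_1}=P_{\lambda_2}=:P$, then for every $\mu\in[\lambda_1,\lambda_2]$ the first inequality of \eqref{A+} for $\mu$ follows from that for $\lambda_1$ (because $\lambda_1/\mu\le 1$) and the second follows from that for $\lambda_2$ (because $\mu/\lambda_2\le 1$), so $[\lambda_1,\lambda_2]\subseteq\rho(A)$. Hence two points in distinct components of $\rho(A)$ cannot carry the same projector, and the nesting of ranges and kernels then forces their ranks to be different; since $r$ is non-decreasing, the values it takes on the successive components of $\rho(A)$ form a strictly increasing chain $0=r_0<r_1<\cdots<r_\ell=d$. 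Thus $\rho(A)$ has at most $d+1$ components, so the bounded gaps between them — which are precisely the spectral intervals, closed as complements of an open set and ordered as in \eqref{espectro} — are $\ell$ in number with $0<\ell\le d$, the value $\ell=0$ being the degenerate case $\Sigma(A)=\emptyset$.

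The step I expect to be the main obstacle is the dynamical characterization of $\range P_\lambda$ and $\ker P_\lambda$, together with the ensuing uniqueness of $P_\lambda$: once these are in hand, the monotonicity of the rank, its local constancy and the interpolation estimate are all routine, but it is exactly this characterization that converts the abstract compact set $\Sigma(A)$ into a union of at most $d$ closed intervals. A secondary point needing care is that the continuity of $\lambda\mapsto P_\lambda$ requires the quantitative form of roughness — with explicit control of the dichotomy constants under perturbation — rather than merely the qualitative persistence of an exponential dichotomy.
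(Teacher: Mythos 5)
Your argument is correct: the localization $\Sigma(A)\subseteq[1/M,M]$, the dynamical characterization of $\range P_\lambda$ and $\ker P_\lambda$ giving uniqueness and monotone nesting of the projectors, the interpolation estimate showing that equal projectors at $\lambda_1<\lambda_2$ force $[\lambda_1,\lambda_2]\subseteq\rho(A)$, and the resulting strictly increasing chain of ranks bounding the number of components of $\rho(A)$ by $d+1$ together constitute the standard proof of the discrete Spectral Theorem. The paper itself offers no proof of Proposition \ref{Compactness-SSS} --- it is quoted from the spectral theory of \cite{Aulbach2000}, \cite{Aulbach2001} (and \cite{SS}) --- and your proposal reproduces essentially the argument given in those references.
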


\begin{remark}
\label{int-nospec}
If $\lambda \notin \Sigma(A)$, it follow from definition that (\ref{pond-lamb}) has an exponential dichotomy on $\mathbb{Z}$ with projector 
$P_{\lambda}$. Nevertheless, it is interesting to note that:
\begin{itemize}
 \item[a)] $\rank(P_{\lambda})$ is constant for any $\lambda \in (b_{i-1},a_{i})$ ($i=2,\ldots,\ell$),
 \item[b)] If $\lambda_{i} \in  (b_{i-1},a_{i})$ and $\lambda_{i+1}\in (b_{i},a_{i+1})$, then
 $\rank(P_{\lambda_{i}})<\rank(P_{\lambda_{i+1}})$,
 \item[c)] $\rank(P_{\lambda})=0$ for
any $\lambda \in (0,a_{1})$ and $\rank(P_{\lambda})=d$ for
any $\lambda \in (b_{\ell},\infty)$.
\end{itemize}
\end{remark}

It is interesting to point out that Siegmund and Aulbach \cite{Aulbach2000},\cite{Aulbach2001} developed an
spectral theory directly from (\ref{LinA}) avoiding the technicalities from linear skew--product flows, which are used in the original
work of Sacker and Sell \cite{SS}. This approach is widely used in the current research and will simplify our work.

\begin{definition}
The system (\ref{LinA}) has the full spectrum condition if
\begin{displaymath}
\Sigma(A)= [a_{1},b_{1}] \cup [a_{2},b_{2}] \cup \cdots \cup [a_{d},b_{d}].
\end{displaymath}
\end{definition}

\subsection{Main results}
In order to contextualize our main results, let us consider the scalar difference equation studied in \cite{Aulbach2000}:
\begin{equation}
\label{baby1}
x_{n+1}=a_{n}x_{n}, \quad \textnormal{with} \quad a_{n}=\left\{\begin{array}{rcl}
                                                              a & \textnormal{if} & n\leq -1\\
                                                              b & \textnormal{if} & n\geq 0
                                                             \end{array}\right. \quad \textnormal{and} \quad 0<a\leq b.
\end{equation}

We claim that (\ref{baby1}) is contracted to any interval $[c,d]$ with $0<c\leq a\leq b \leq d$. Indeed, given a fixed $\delta>0$, we consider
\begin{displaymath}
F(n,\delta)=\left\{\begin{array}{rcl}
              \prod\limits_{j=0}^{n-1}(1+\frac{\delta \cos^{2}(j)}{j^{2}+1})^{-1}F_{0} &\textnormal{if} & n\geq 1\\\\ 
              \prod\limits_{j=n}^{-1}(1+\frac{\delta \cos^{2}(j)}{j^{2}+1})F_{0} &\textnormal{if} & n\leq -1,
              \end{array}\right.
\end{displaymath}
where $F_{0}\neq 0$ and we can verify that (\ref{baby1}) is $\delta$--kinematically similar to
\begin{displaymath}
y_{n+1}=a_{n}\{1+b_{n}\}y_{n} \quad \textnormal{with} \quad b_{n}=\frac{\delta \cos^{2}(n)}{1+n^{2}}.
\end{displaymath}

The claim follows since $a_{n}\in [a,b] \subseteq [c,d]$ and $|b_{n}|\leq \delta$. Morevoer, it is shown 
in \cite{Aulbach2000} that the linear difference equation (\ref{baby1}) has $\Sigma(A)=[a,b]$ and we
verified that (\ref{baby1}) can be contracted to any closed interval containing $[a,b]$. In addition, $[a,b]$ is 
the minimal interval where the system can be contracted. This fact illustrates our 
main result.

\begin{theorem}
\label{Main}
If the spectrum $\Sigma(A)$ is a bounded set of $\mathbb{R}^{+}$, then is a contractible set of (\ref{LinA}). 
\end{theorem}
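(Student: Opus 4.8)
The strategy is to reduce the theorem to the special case of the \emph{full spectrum condition} and then, within each spectral interval, to mimic the scalar computation carried out for \eqref{baby1}. The first step is to invoke the Spectral Theorem (Proposition~\ref{Compactness-SSS}) together with the rank properties in Remark~\ref{int-nospec}: choosing values $\lambda_1<\lambda_2<\cdots<\lambda_{\ell-1}$ in the spectral gaps $(b_i,a_{i+1})$, each $\lambda_i\notin\Sigma(A)$ yields an exponential dichotomy \eqref{A+} for \eqref{pond-lamb} with a projector $P_{\lambda_i}$ of rank strictly increasing in $i$. These nested invariant splittings decompose $\mathbb{R}^d$ into $\ell$ Whitney sums of invariant subbundles $W_1\oplus\cdots\oplus W_\ell$, with $\dim W_i=\rank(P_{\lambda_i})-\rank(P_{\lambda_{i-1}})$, and the restriction of \eqref{LinA} to the $W_i$-component has Sacker--Sell spectrum exactly $[a_i,b_i]$. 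One must check that a bounded change of variables block-diagonalizes $A(n)$ along this decomposition (a standard consequence of the dichotomy estimates, producing bounded transition matrices on each block); after this reduction it suffices to contract each diagonal block into its own spectral interval $[a_i,b_i]$ and concatenate the resulting $\delta$--kinematic similarities.

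The second step is the heart of the argument: given a block whose spectrum is a single interval $[a,b]\subset\mathbb{R}^+$, show it can be contracted to $[a,b]$ (or any closed interval containing it). Here I would use the technical machinery promised in Section~3 — in particular the diagonal-significance property established in Proposition~\ref{roughness1} — which should allow one to replace the block, up to bounded kinematic similarity, by an upper-triangular system and then by a genuinely diagonal one $\diag(C_1(n),\ldots,C_d(n))$ whose entries still lie in $[a,b]$ because the diagonal entries control the Sacker--Sell spectrum. (The point of "diagonal significance" is precisely that for such systems the spectrum can be read off from the diagonal, so a bounded triangularization followed by discarding the strictly-triangular part changes the system only by a term that can be absorbed into the $\{I+B(n)\}$ factor with $\|B(n)\|\le\delta$.) Finally, for the remaining error term and to push the diagonal entries to the \emph{closure} of $[a,b]$, one applies the scalar trick from \eqref{baby1}: multiplying the $i$-th diagonal entry by $(1+\tfrac{\delta\cos^2(n)}{1+n^2})^{\pm1}$ via the explicit bounded, boundedly-invertible transformation $F(n,\delta)$ of that example realizes a $\delta$--kinematic similarity to $\diag(C_i(n))\{I+B(n)\}$ with $\|B(n)\|\le\delta$, completing the contractibility.

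The main obstacle I anticipate is the block-diagonalization step and the triangularization within a block: one must produce the changes of variables with norms bounded \emph{uniformly in $n$ over all of $\mathbb{Z}$}, not just asymptotically, and verify that the transformed system still satisfies (P1)--(P2) so that the spectral theory applies to each block. The dichotomy estimates \eqref{A+} give the decay needed to define the block projections $n\mapsto Q_i(n)$ and bound them, but showing that the Gram--Schmidt/triangularization procedure on each invariant subbundle yields a \emph{bounded} transformation — rather than one that merely exists pointwise — requires the uniform boundedness hypotheses (P2) in an essential way, and this is exactly where Proposition~\ref{roughness1} on diagonal significance should be doing the work. A secondary subtlety is bookkeeping the constants: each of the finitely many reduction steps introduces a bounded transformation, and one must argue the composition is still bounded and that the cumulative error term can be made $\le\delta$ by choosing the perturbations in the scalar trick small enough, uniformly across the $\ell\le d$ blocks. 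Minimality of $\Sigma(A)$ among contractible sets — needed to conclude it is \emph{the} contractible set in the sense of Definition~\ref{contractibilidad} — follows from the converse direction: a contraction to $E$ would force each diagonal entry, hence each spectral interval, to lie in $E$, since kinematic similarity preserves the Sacker--Sell spectrum.
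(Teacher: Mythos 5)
Your overall architecture matches the paper's: block-diagonalize via the spectral gaps (Proposition~\ref{bloques}), triangularize each block, invoke diagonal significance (Proposition~\ref{roughness1}), kill the off-diagonal part with a $\beta$-transformation, and get minimality from the converse direction (Proposition~\ref{minimal}). But there is a genuine gap at the heart of your second step. You assert that after triangularization the diagonal entries ``still lie in $[a,b]$ because the diagonal entries control the Sacker--Sell spectrum.'' Diagonal significance only gives $\bigcup_{r}\Sigma(c_{rr})=[a,b]$ as a statement about the \emph{spectra} of the scalar equations; it says nothing about the pointwise values $c_{rr}(n)$, which in general do not lie in $[a,b]$. (A scalar coefficient alternating between $2$ and $1/2$ has spectrum $\{1\}$ while never taking the value $1$.) So you cannot simply read off the required $C_i(n)\in[a_i,b_i]$ from the diagonal, and the $\cos^2$-trick from \eqref{baby1} cannot repair this: that device only realizes a perturbation whose infinite product converges as a bounded kinematic similarity; it cannot move a diagonal entry from outside the spectral interval into it.

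What is actually needed --- and what the paper supplies in Lemma~\ref{technical-result} and its appendix proof --- is a construction of sequences $h_i(n)\in[a_i,b_i]$ and $|\Delta_i(n)|\le\delta/2$ together with a \emph{diagonal} change of variables $L_i(n)=\diag(\mu_1(n),\ldots,\mu_{m_i}(n))$, with $\mu_r(n)=\prod_{j=0}^{n-1}c_{i_{rr}}(j)/(h_i(j)+\Delta_i(j))$ for $n\ge 0$, that is bounded together with its inverse. Boundedness of these products is nontrivial: it is obtained by an inductive ``switching'' argument that alternates the denominator between $a_i-\delta/2$ and $b_i+\delta/2$ on intervals $\{N_l,\ldots,N_{l+1}-1\}$, using the exponential dichotomy of the two weighted scalar equations \eqref{primera} and \eqref{segunda} (projectors $0$ and $1$ respectively, guaranteed by Proposition~\ref{invariance1} and diagonal significance) to keep the cumulative product trapped in $[1/\mu,\mu]$. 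This is the step your proposal does not contain, and without it the passage from ``spectrum equals $[a,b]$'' to ``diagonal entries can be taken in $[a,b]$ up to a $\delta$-small relative error'' is unjustified. The rest of your outline (QR triangularization, $\beta$-transformation bounds, composition of finitely many bounded similarities, minimality via preservation of the spectrum under kinematic similarity) is consistent with the paper's argument.
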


\begin{theorem}
\label{Main2}
If (\ref{LinA}) has the full spectrum condition, then is diagonalizable. 
\end{theorem}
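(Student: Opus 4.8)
\textbf{Proof plan for Theorem \ref{Main2}.}

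The plan is to derive Theorem \ref{Main2} as a direct consequence of Theorem \ref{Main} together with the structure of the Sacker--Sell spectrum under the full spectrum condition. First I would observe that when (\ref{LinA}) has the full spectrum condition, the spectrum consists of exactly $d$ disjoint closed intervals $[a_1,b_1]\cup\cdots\cup[a_d,b_d]$, and by Remark \ref{int-nospec} the associated invariant projectors jump in rank by exactly one as $\lambda$ crosses each gap. This yields a filtration of $\mathbb{R}^d$ into invariant subbundles $W_1,\ldots,W_d$ of rank one each, so that $X(n,k)$ respects a decomposition into $d$ one-dimensional invariant subspaces. Since $\Sigma(A)$ is bounded, Theorem \ref{Main} applies and (\ref{LinA}) is contracted to $\Sigma(A)$: for every $\delta>0$ it is $\delta$--kinematically similar to a diagonal system $y(n+1)=\diag(C_1(n),\ldots,C_d(n))\{I+B(n)\}y(n)$ with $C_i(n)\in\Sigma(A)$ and $\|B(n)\|\le\delta$.

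The next step is to upgrade this "almost" diagonalization to an exact one. The key point is that, because the $d$ spectral intervals are pairwise disjoint and separated by spectral gaps, the perturbation term $\{I+B(n)\}$ can be absorbed by a further kinematic similarity. Concretely, once $\delta$ is chosen smaller than the size of the smallest spectral gap, the perturbed diagonal system $\diag(C_i(n))\{I+B(n)\}$ still possesses $d$ distinct spectral intervals with one-dimensional invariant subbundles, and a roughness-type argument (using the exponential dichotomies on each of the $d-1$ gaps, cf. the estimates (\ref{A+}) in Remark \ref{otra-MF}) produces a bounded, boundedly-invertible transformation conjugating it to a genuinely diagonal system. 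Alternatively, and perhaps more cleanly, I would go back to the invariant filtration $W_1,\ldots,W_d$ constructed in the first paragraph: choosing a bounded frame adapted to this filtration (via a Gram--Schmidt / QR procedure along the orbit, whose boundedness follows from (P2) and the dichotomy estimates) directly block-triangularizes (\ref{LinA}) with $1\times 1$ diagonal blocks, hence diagonalizes it, with the diagonal entries automatically lying in the corresponding spectral intervals.

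The main obstacle I anticipate is controlling the boundedness of the diagonalizing transformation. Constructing invariant subbundles is standard, but the frame that trivializes a rank-one invariant subbundle need not a priori have bounded norm together with its inverse; one must exploit the uniform two-sided bounds (P2) and the uniformity of the dichotomy constants $K,\rho$ across the $d-1$ gaps to guarantee that the angles between the invariant subbundles stay bounded away from zero uniformly in $n$, which is exactly what makes the change of variables a kinematic similarity in the sense of Definition \ref{KS}. A secondary technical point is verifying that the off-diagonal entries, which are only block-triangular after adapting the frame to the filtration, genuinely vanish rather than merely decay — this requires the filtration to split into a direct sum of invariant lines, which in turn uses that the dichotomy projectors for $\lambda$ in consecutive gaps are nested, as recorded in Remark \ref{int-nospec}(b). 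Once these boundedness and splitting facts are in place, the conclusion that (\ref{LinA}) is kinematically similar to $\diag(C_1(n),\ldots,C_d(n))$ with $C_i(n)$ in the $i$-th spectral interval is immediate.
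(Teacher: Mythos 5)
Your proposal reaches the right conclusion, but it overlooks that the paper has already done the heavy lifting for you: the paper's proof of Theorem \ref{Main2} is a two-line corollary of Proposition \ref{bloques}. That proposition gives an \emph{exact} kinematic similarity of (\ref{LinA}) to $\diag(B_1(n),\ldots,B_\ell(n))$ with one block per spectral interval; under the full spectrum condition $\ell=d$, so each block is $1\times 1$ and the block-diagonal matrix is literally diagonal. Your ``alternative'' route in the second paragraph is essentially a re-derivation of Proposition \ref{bloques} in the special case of rank-one spectral bundles: the nested projectors of Remark \ref{int-nospec} give the Whitney sum into $d$ invariant lines, and the boundedness of the adapted frame (your main anticipated obstacle) is exactly what that proposition, via the recursive use of Papaschinopoulos's splitting lemma, already guarantees. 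Two caveats. First, your QR/Gram--Schmidt phrasing is slightly off: a frame adapted merely to the filtration yields an upper triangular system, not a diagonal one; you need the frame adapted to the direct-sum splitting of invariant lines, which you do eventually identify as the ``secondary technical point'' --- but it is the main point, not a secondary one. Second, your first route (apply Theorem \ref{Main} and then ``absorb'' the perturbation $I+B(n)$ by a further kinematic similarity) is not justified as stated: roughness tells you the perturbed system retains its dichotomies on the spectral gaps, but it does not by itself produce a bounded conjugacy killing $B(n)$; to remove the perturbation you would have to re-invoke the block-diagonalization machinery anyway, so this detour through almost reducibility buys nothing. The clean proof is to skip Theorem \ref{Main} entirely and quote Proposition \ref{bloques}.
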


% DIAGONALIZABILIDAD CON FULL SPECTRUM

\begin{remark}
Note that
\begin{itemize}
\item[a)] A careful reading of the proof of Theorem \ref{Main} will show that the coefficient $\delta>0$ of
Definition \ref{def-lin} has an upper bound dependent of $\Sigma(A)$.
\item[b)] In spite that Theorem \ref{Main2} is not new, we point out the remarkable simplicity of our proof which follows the lines
of the proof of Theorem \ref{Main}.
\end{itemize}
\end{remark}

\section{Preparatory results}
The kinematically similar between (\ref{LinA}) and (\ref{LinB}) will be denoted by $A \simeq U$. Let us recall
that kinematical similarity is an equivalence relation and have several properties described in the following
lemmatas:
\begin{lemma}
\label{KS-escalar} 
If $A \simeq B$, then $\mu A \simeq \mu B$ for any $\mu \in \mathbb{R}^{+}$.
\end{lemma}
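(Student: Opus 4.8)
The plan is to unwind the definition of kinematical similarity and observe that scaling both systems by the same positive constant $\mu$ leaves the connecting transformation untouched. Suppose $A \simeq B$, so that by Definition \ref{KS} there is an invertible matrix function $F(n)$ with $\sup_{n}\|F(n)\| < \infty$ and $\sup_{n}\|F^{-1}(n)\| < \infty$ such that the substitution $y(n) = F^{-1}(n)x(n)$ carries $x(n+1)=A(n)x(n)$ into $y(n+1)=B(n)y(n)$. Concretely, writing out what this substitution does, one gets the matrix identity $B(n) = F^{-1}(n+1)A(n)F(n)$ for every $n \in \mathbb{Z}$.

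First I would multiply this identity by the scalar $\mu \in \mathbb{R}^{+}$ on both sides, obtaining $\mu B(n) = F^{-1}(n+1)\,\bigl(\mu A(n)\bigr)\,F(n)$, since the scalar commutes past the matrix products. This is exactly the statement that the \emph{same} transformation $F(n)$ conjugates the system $x(n+1) = \mu A(n) x(n)$ into $y(n+1) = \mu B(n) y(n)$. Because the boundedness conditions $\sup_n\|F(n)\| < \infty$ and $\sup_n\|F^{-1}(n)\| < \infty$ depend only on $F$ and not on the coefficient matrices, they are inherited verbatim, and invertibility of $F(n)$ is likewise unchanged. Hence $\mu A \simeq \mu B$.

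There is essentially no obstacle here: the only point to be slightly careful about is that $\mu > 0$ guarantees $\mu A(n)$ still satisfies the standing invertibility hypothesis \textbf{(P1)} (and, if one wishes to stay within the class of systems considered, that $\sup_n \|\mu A(n)\| < \infty$ and $\sup_n \|(\mu A(n))^{-1}\| = \mu^{-1}\sup_n\|A^{-1}(n)\| < \infty$, which hold trivially for $\mu \in \mathbb{R}^{+}$). The proof is a one-line computation once the conjugation identity $B(n)=F^{-1}(n+1)A(n)F(n)$ is written down, so I would present it in that compact form.
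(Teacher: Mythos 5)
Your argument is correct and is exactly the ``straightforward exercise'' the paper alludes to (the paper omits the proof entirely): the conjugation identity $B(n)=F^{-1}(n+1)A(n)F(n)$ scales to $\mu B(n)=F^{-1}(n+1)\,\mu A(n)\,F(n)$, so the same bounded $F$ with bounded inverse witnesses $\mu A\simeq\mu B$. No gaps.
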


\begin{lemma}
\label{preservacion1}
If $A\simeq B$ and (\ref{LinA}) has an exponential dichotomy on $\mathbb{Z}$, then 
(\ref{LinB}) has also an exponential dichotomy with the same
projector and constant $\rho$.
\end{lemma}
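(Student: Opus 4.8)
The plan is to transport the dichotomy estimates from the transition matrix of (\ref{LinA}) to that of (\ref{LinB}) through the bounded change of variables, the point being that kinematic similarity conjugates transition matrices by a bounded, boundedly invertible matrix function. First I would record the algebraic relation between the two systems. Let $F(n)$ be the transformation from Definition \ref{KS} realizing $A\simeq B$, so that $y(n)=F^{-1}(n)x(n)$ carries (\ref{LinA}) into (\ref{LinB}); then $B(n)=F^{-1}(n+1)A(n)F(n)$, and a direct computation shows that $\widehat{Y}(n):=F^{-1}(n)X(n)$ is a solution matrix of (\ref{LinB}). Normalizing at $n=0$, the fundamental matrix of (\ref{LinB}) in the sense of (\ref{FM}) is $Y(n)=F^{-1}(n)X(n)F(0)$, and hence its transition matrix satisfies the clean identity $Y(n,k)=Y(n)Y(k)^{-1}=F^{-1}(n)X(n,k)F(k)$ for all $n,k\in\mathbb{Z}$.

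Next I would set $\bar{F}:=\max\{\sup_{n}\|F(n)\|,\ \sup_{n}\|F^{-1}(n)\|\}<\infty$ (finite by Definition \ref{KS}) and take as candidate projector for (\ref{LinB}) the matrix $\widetilde{P}:=F(0)^{-1}PF(0)$, where $P$ is the dichotomy projector of (\ref{LinA}); clearly $\widetilde{P}^{2}=\widetilde{P}$, and $F(0)\widetilde{P}F(0)^{-1}=P$, $F(0)(I-\widetilde{P})F(0)^{-1}=I-P$. Then for $n\geq k$ one has
\begin{displaymath}
Y(n)\widetilde{P}\,Y(k)^{-1}=F^{-1}(n)\,X(n)F(0)\widetilde{P}F(0)^{-1}X(k)^{-1}\,F(k)=F^{-1}(n)\left(X(n)PX(k)^{-1}\right)F(k),
\end{displaymath}
so taking norms and invoking the dichotomy (Definition \ref{ED}) of (\ref{LinA}) gives $\|Y(n)\widetilde{P}\,Y(k)^{-1}\|\leq \bar{F}^{2}K\rho^{\,n-k}$. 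The symmetric computation with $I-\widetilde{P}$ for $n\leq k$ yields $\|Y(n)(I-\widetilde{P})\,Y(k)^{-1}\|\leq \bar{F}^{2}K\rho^{\,k-n}$. Hence (\ref{LinB}) has an exponential dichotomy on $\mathbb{Z}$ with projector $\widetilde{P}$, constant $\widetilde{K}=\bar{F}^{2}K\geq 1$ and the \emph{same} rate $\rho$; moreover, if one measures the dichotomy of (\ref{LinB}) through the non-normalized fundamental matrix $\widehat{Y}(n)=F^{-1}(n)X(n)$, the projector is literally $P$, so that both the projector and $\rho$ are preserved as stated.

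I do not expect a genuine obstacle here: once the transition-matrix identity $Y(n,k)=F^{-1}(n)X(n,k)F(k)$ is in place, the conclusion is a one-line conjugation estimate. The only points requiring a little care are bookkeeping ones: (i) tracking the projector, which is preserved only up to conjugation by the constant matrix $F(0)$ (harmless, and absent altogether for the unnormalized fundamental matrix); and (ii) placing the $\sup$-bounds on $F$ and $F^{-1}$ on the correct sides of the inequality, so that only the constant $K$ is inflated and the exponential rate $\rho$ is left untouched.
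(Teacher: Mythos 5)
Your proof is correct and is exactly the standard conjugation argument the paper intends (the paper omits the proof entirely, declaring it a ``straightforward exercise'' analogous to the continuous case): the identity $Y(n,k)=F^{-1}(n)X(n,k)F(k)$ plus the two-sided boundedness of $F$ transfers the estimates with the rate $\rho$ untouched and only $K$ inflated to $\bar F^{2}K$. Your observation that the projector is preserved only up to conjugation by the constant matrix $F(0)$ --- so that its rank, which is all the paper ever uses of it (cf.\ Remark \ref{int-nospec}), is unchanged --- is a worthwhile clarification of the lemma's slightly loose phrase ``same projector.''
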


The proof of these results is a straightforward exercise and can be done similarly as in the continuous 
case.

\begin{lemma}
\label{KS-Spec}
If $A\simeq B$, then $\Sigma(A)=\Sigma(B)$. 
\end{lemma}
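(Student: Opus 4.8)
\textbf{Proof proposal for Lemma \ref{KS-Spec}.}

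The plan is to show directly from Definition \ref{SSS} that $\lambda \notin \Sigma(A)$ if and only if $\lambda \notin \Sigma(B)$, which is equivalent to the claimed equality of spectra. Fix $\lambda > 0$ and observe that the Sacker--Sell spectrum detects whether the scaled system (\ref{pond-lamb}) admits an exponential dichotomy on $\mathbb{Z}$. So the natural route is to combine two facts already available in the excerpt: Lemma \ref{KS-escalar}, which tells us that $A \simeq B$ implies $\lambda^{-1}A \simeq \lambda^{-1}B$ (taking $\mu = \lambda^{-1} \in \mathbb{R}^{+}$), and Lemma \ref{preservacion1}, which tells us that kinematic similarity transports an exponential dichotomy from one system to the other, with the same projector and the same constant $\rho$.

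The argument then runs as follows. Suppose $\lambda \notin \Sigma(A)$. By Definition \ref{SSS} the system $x(n+1) = \lambda^{-1}A(n)x(n)$ has an exponential dichotomy on $\mathbb{Z}$. By Lemma \ref{KS-escalar}, since $A \simeq B$ we have $\lambda^{-1}A \simeq \lambda^{-1}B$, and then Lemma \ref{preservacion1} applied to the scaled systems shows that $x(n+1) = \lambda^{-1}B(n)x(n)$ also has an exponential dichotomy on $\mathbb{Z}$; hence $\lambda \notin \Sigma(B)$. This yields $\Sigma(B) \subseteq \Sigma(A)$. Since kinematic similarity is an equivalence relation (in particular symmetric), $B \simeq A$ as well, and the same reasoning gives $\Sigma(A) \subseteq \Sigma(B)$. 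Therefore $\Sigma(A) = \Sigma(B)$.

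There is essentially no obstacle here: the statement is a formal consequence of the two preceding lemmas together with the symmetry of kinematic similarity, and the only point requiring a line of care is the bookkeeping check that applying Lemma \ref{preservacion1} to the pair $(\lambda^{-1}A, \lambda^{-1}B)$ is legitimate — that is, that the transformation realizing $A \simeq B$ is literally the same bounded, boundedly-invertible $F(n)$ that realizes $\lambda^{-1}A \simeq \lambda^{-1}B$ (indeed the change of variables $y(n) = F^{-1}(n)x(n)$ is unaffected by the scalar factor $\lambda^{-1}$, as can be seen by inspecting how it acts on (\ref{LinA})). Once this is noted, the proof is complete.
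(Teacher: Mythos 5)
Your proof is correct and follows exactly the same route as the paper's: apply Lemma \ref{KS-escalar} to get $\lambda^{-1}A \simeq \lambda^{-1}B$, transfer the exponential dichotomy via Lemma \ref{preservacion1}, and use symmetry of kinematic similarity for the reverse inclusion. Your additional remark that the same transformation $F(n)$ realizes the similarity of the scaled systems is a worthwhile detail the paper leaves implicit.
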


\begin{proof}
Firstly, if $A\simeq B$, Lemma \ref{KS-escalar} implies  $\lambda^{-1}A\simeq \lambda^{-1}B$. Secondly
if $\lambda \notin \Sigma(A)$,  Lemma \ref{preservacion1} implies that  $\lambda \notin \Sigma(B)$,
which is equivalent to $\Sigma(B)\subseteq \Sigma(A)$. The inverse contention can be proved analogously.
\end{proof}

The following results give us useful properties 
of the spectrum $\Sigma(A)$:

\begin{lemma}
\label{intervalos-ponderados}
If $||A(n)||<\infty$ for any $n\in \mathbb{Z}$ and $\lambda \notin \Sigma(A)$, then
\begin{equation}
\label{SS}
\Sigma(\lambda^{-1}A)=\Big[\frac{a_{1}}{\lambda},\frac{b_{1}}{\lambda}\Big] \cup
                   \Big[\frac{a_{2}}{\lambda},\frac{b_{2}}{\lambda}\Big] \cup \cdots \cup \Big[\frac{a_{\ell-1}}{\lambda},\frac{b_{\ell-1}}{\lambda}\Big]\cup 
                  \Big[\frac{a_{\ell}}{\lambda},\frac{b_{\ell}}{\lambda}\Big]
\end{equation}
\end{lemma}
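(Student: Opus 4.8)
The plan is to reduce the statement for $\Sigma(\lambda^{-1}A)$ to the definition of the Sacker--Sell spectrum via a simple rescaling identity. Recall that, by Definition~\ref{SSS}, a number $\mu>0$ belongs to $\Sigma(\lambda^{-1}A)$ precisely when the system $x(n+1)=\mu^{-1}\lambda^{-1}A(n)x(n)$ fails to have an exponential dichotomy on $\mathbb{Z}$. The crucial observation is that $\mu^{-1}\lambda^{-1}A(n) = (\mu\lambda)^{-1}A(n)$, so the weighted system associated to $\mu$ and $\lambda^{-1}A$ coincides \emph{exactly} with the weighted system associated to the single weight $\mu\lambda$ and $A$. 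Hence $\mu\in\Sigma(\lambda^{-1}A)$ if and only if $\mu\lambda\in\Sigma(A)$, i.e.
\begin{equation*}
\Sigma(\lambda^{-1}A)=\tfrac{1}{\lambda}\,\Sigma(A)=\{\,\nu/\lambda : \nu\in\Sigma(A)\,\}.
\end{equation*}

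First I would state and verify this set identity, which is immediate from the computation above and needs no hypothesis beyond $\lambda>0$. Then I would invoke the Spectral Theorem (Proposition~\ref{Compactness-SSS}), which applies because $\|A(n)\|<\infty$ for all $n$, to write $\Sigma(A)=[a_1,b_1]\cup\cdots\cup[a_\ell,b_\ell]$ with $0<a_1\le b_1<\cdots<a_\ell\le b_\ell$. Since $x\mapsto x/\lambda$ is an increasing bijection of $\mathbb{R}^+$ onto itself, it maps each closed interval $[a_i,b_i]$ onto the closed interval $[a_i/\lambda,\,b_i/\lambda]$ and preserves the strict inequalities between consecutive endpoints; applying it to the displayed decomposition of $\Sigma(A)$ yields exactly \eqref{SS}. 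The role of the hypothesis $\lambda\notin\Sigma(A)$ is only to guarantee that we are in the generic situation where the spectral picture of $\lambda^{-1}A$ is the cleanly rescaled one (and, in the intended applications, that $\lambda^{-1}A$ itself has a dichotomy gap at $1$); it is not actually needed for the set-theoretic equality.

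I do not expect a genuine obstacle here: the argument is a one-line rescaling combined with monotonicity of multiplication by a positive constant, and the only care required is bookkeeping to check that the ordering $0<a_1/\lambda\le b_1/\lambda<\cdots<a_\ell/\lambda\le b_\ell/\lambda$ is inherited from the ordering in Proposition~\ref{Compactness-SSS}. If one wanted to be scrupulous, the mildest point to watch is that the number of spectral intervals is unchanged under the rescaling — but this is clear since a bijection cannot merge or split the connected components of $\Sigma(A)$. Thus the proof is essentially: (i) the identity $(\mu\lambda)^{-1}A=\mu^{-1}(\lambda^{-1}A)$ gives $\mu\in\Sigma(\lambda^{-1}A)\iff\mu\lambda\in\Sigma(A)$; (ii) apply the Spectral Theorem to $\Sigma(A)$; (iii) push the interval decomposition through the increasing bijection $x\mapsto x/\lambda$ to obtain \eqref{SS}.
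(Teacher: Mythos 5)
Your proposal is correct and is essentially the paper's own argument: the identity $\mu^{-1}(\lambda^{-1}A)=(\mu\lambda)^{-1}A$ gives $\mu\in\Sigma(\lambda^{-1}A)\iff\mu\lambda\in\Sigma(A)$, and the interval decomposition from the Spectral Theorem is then rescaled by $1/\lambda$. Your added remark that $\lambda\notin\Sigma(A)$ is not actually needed for the set identity is also accurate.
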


\begin{proof}
By definition of spectrum, we know that
\begin{displaymath}
\Sigma(\lambda^{-1}A)=\Big\{\mu\in \mathbb{R}^{+} \colon x(n+1)=\frac{1}{\mu\lambda}A(n)x(n) \quad  \textnormal{has not an exponential dichotomy on $\mathbb{Z}$}\Big\}
\end{displaymath}

Notice that 
\begin{displaymath}
\begin{array}{rcl}
\mu \in \Sigma(\lambda^{-1}A) & \Leftrightarrow & \frac{1}{\mu\lambda} \in \Sigma(A)\\\\
                              & \Leftrightarrow & a_{j}\leq \mu\lambda \leq b_{j} \quad \textnormal{for some $j=1,\ldots,\ell$}\\\\
                              & \Leftrightarrow & \frac{a_{j}}{\lambda} \leq \mu \leq \frac{b_{j}}{\lambda}
\end{array}
\end{displaymath}
and the result follows.
\end{proof}

\begin{proposition}
\label{invariance1}
If $\Sigma(A) \subseteq [a,b],$ and $\lambda > b$ (resp. or $\lambda < a$), the system
$$
x(n+1)=\lambda^{-1}A(n)x(n)
$$
has an exponential dichotomy on $\mathbb{Z}$ with projector $P=I$ (resp. with projector $P=0$) and 
constants $K=1$ and $\theta\in (0,1)$.
\end{proposition}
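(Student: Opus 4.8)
The plan is to read the conclusion off the spectral theory already assembled. \textbf{Step 1 (put $\lambda$ strictly outside the spectrum).} By the Spectral Theorem (Proposition~\ref{Compactness-SSS}), $\Sigma(A)\subseteq[a,b]$ forces the extreme endpoints to satisfy $b_{\ell}\le b$ and $a_{1}\ge a$; hence $\lambda>b\ge b_{\ell}$ (resp.\ $\lambda<a\le a_{1}$) gives $\lambda\notin\Sigma(A)$. By Definition~\ref{SSS} and Remark~\ref{otra-MF}, the weighted system \eqref{pond-lamb} then has an exponential dichotomy on $\mathbb{Z}$: there exist a projector $P_{\lambda}$, a constant $K_{0}\ge1$ and a rate $\theta_{0}\in(0,1)$ for which \eqref{A+} holds.

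\textbf{Step 2 (identify the projector).} Apply Remark~\ref{int-nospec}(c): since $\lambda\in(b_{\ell},\infty)$ we get $\rank(P_{\lambda})=d$, and since (in the other case) $\lambda\in(0,a_{1})$ we get $\rank(P_{\lambda})=0$. A projector of full rank equals $I$ and a projector of rank zero equals the zero map, so $P_{\lambda}=I$ (resp.\ $P_{\lambda}=0$). With $P_{\lambda}=I$ the second inequality in Definition~\ref{ED} reads $\|X(n)\,0\,X(k)^{-1}\|=0\le K\theta^{\,k-n}$ and is automatic, so the dichotomy amounts to the single forward estimate $\|\lambda^{-(n-k)}X(n,k)\|\le K\theta^{\,n-k}$ for $n\ge k$; symmetrically, with $P_{\lambda}=0$ only the backward estimate survives.

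\textbf{Step 3 (normalise the constant).} It remains to replace the $K_{0}\ge1$ furnished by Step~1 by $K=1$. Here I would exploit the uniform two-sided bounds in \textbf{(P2)}, say $M:=\sup_{n}\|A(n)\|<+\infty$ and $M':=\sup_{n}\|A^{-1}(n)\|<+\infty$: split $n-k$ into consecutive blocks of a fixed length $N$ chosen so that $K_{0}\theta_{0}^{\,N}\le\theta^{\,N}$ for a prescribed $\theta\in(\theta_{0},1)$, estimate each full block through the cocycle identity $X(n,k)=X(n,n-N)\cdots X(k+N,k)$ by $\theta^{\,N}$, and bound the leftover short block of length $r<N$ crudely by $M^{\,r}$ (resp.\ $(M')^{\,r}$ in the backward case); absorbing this bounded leftover factor into the exponential then gives the estimate with $K=1$. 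I expect this absorption to be the crux of the argument: the dichotomy alone only produces some $K_{0}\ge1$, and pushing it down to $1$ must use \textbf{(P2)} to handle the finitely many short transition products $X(k+1,k),\dots,X(k+N-1,k)$ — so the real work sits in those short pieces, not in the (easy) asymptotic decay.
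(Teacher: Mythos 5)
Your Steps 1 and 2 are correct and coincide with half of the paper's argument: the paper also pins down the projector through the rank statements of Remark \ref{int-nospec}. The genuine gap is Step 3. Block splitting trades the rate $\theta_{0}$ for a worse rate $\theta$, but it does not, and cannot, bring the constant down to $K=1$. Writing $n-k=qN+r$ with $0\le r<N$, your estimate yields
\[
\|\lambda^{-(n-k)}X(n,k)\|\le\Big(\tfrac{M}{\lambda\theta}\Big)^{r}\,\theta^{\,n-k},
\]
and the prefactor $\max_{0\le r<N}(M/(\lambda\theta))^{r}$ is in general strictly larger than $1$. A constant $c>1$ can never be ``absorbed into the exponential'': the inequality $c\,\theta^{\,n-k}\le\tilde\theta^{\,n-k}$ would have to hold at $n=k$, where it reads $c\le1$. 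More fundamentally, a bound $\|\lambda^{-(n-k)}X(n,k)\|\le\theta^{\,n-k}$ with $K=1$, applied at $n=k+1$, forces $\|\lambda^{-1}A(k)\|\le\theta<1$ for every $k$, i.e. $\lambda>\sup_{n}\|A(n)\|$; this does not follow from $\lambda>b\ge\sup\Sigma(A)$, since the spectrum may lie far below $\sup_{n}\|A(n)\|$. So no regrouping of an abstract dichotomy with constant $K_{0}$ will produce $K=1$.

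The paper obtains the $K=1$ estimate directly rather than by post-processing an abstract one: the discrete Gronwall inequality gives $\|X(n)X^{-1}(k)\|<(1+L)^{n-k}$ for $n\ge k$, with $L=\sup_{n}\|A(n)-I\|$, so for the auxiliary weight $h=\max\{L+2,\lambda\}$ one has $\|h^{-(n-k)}X(n)X^{-1}(k)\|<((1+L)/h)^{n-k}$, which is an exponential dichotomy with $P=I$, $K=1$ and $\theta=(1+L)/h$ on the nose; the projector for the given $\lambda$ is then recovered from the constancy of $\rank(P_{\lambda})$, exactly as in your Step 2. To prove the statement as written you should reproduce that Gronwall step (or restrict to $\lambda\ge L+2$, where $h=\lambda$); your Step 3 as proposed fails.
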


\begin{proof}
We will consider the case $\lambda>b$, the other one can be proved by the reader in a similar way. By using the 
discrete Gronwall's inequality \cite[Ch.4]{Elaydi}, it is easy to deduce
\begin{displaymath}
||X(n)X^{-1}(k)||< (1+L)^{n-k} \quad \textnormal{where} \quad n\geq k \quad \textnormal{and}\quad L=\sup\limits_{n\in \mathbb{Z}}||A_{n}-I||.
\end{displaymath}

 Let $h=\max\{L+2,\lambda\}$ and note that $(1+L)/h \in (0,1)$ and
\begin{equation}
\label{Linh}
||X(n)\big(\frac{1}{h}\big)^{n}X^{-1}(k)\big(\frac{1}{h}\big)^{-k}||< \Big(\frac{1+L}{h}\Big)^{n-k} \quad n\geq k, 
\end{equation}
which implies that the system
$$
x(n+1)=h^{-1}A(n)x(n)
$$
has an exponential dichotomy on $\mathbb{Z}$ with projector $P_{h}=I$. By using Remark \ref{int-nospec}, we know that (\ref{pond-lamb})
also has an exponential dichotomy on $\mathbb{Z}$ with $P_{\lambda}=P_{h}=I$ for any $\lambda>b$.
\end{proof}

 \begin{remark}
  The result above 
  gives explicit constants for the exponential dichotomy as $K=1$ and $\theta=(1+L)/h\in (0,1)$. This fact will be useful
  in some future steps.
 \end{remark}

 The following result has been proved by Siegmund in \cite{Siegmund} in a more general case with conditions less restrictive than 
 \textbf{(P1)--(P2)}. Moreover, the exponential dichotomy considers
 variable projectors. Nevertheless, we provide a proof in order to make the article the most self--contained possible.

\begin{proposition}
\label{bloques}
If (\ref{LinA}) verifies properties \textnormal{\textbf{(P1)--(P2)}} and its spectrum
is of type (\ref{espectro}), then there exist $\ell$ matrices $B_i(n)$
where $\Sigma(B_i) = [a_i, b_i]$ with $i = 1, \ldots, \ell,$ such that (\ref{LinA}) is kinematically similar to the system
\begin{equation}
\label{diag2}
x(n+1) = \diag(B_1(n), \ldots, B_\ell(n))x(n).
\end{equation}
\end{proposition}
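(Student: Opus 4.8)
The plan is to argue by induction on the number $\ell$ of spectral intervals. If $\ell=1$ there is nothing to prove (take $B_1=A$), so assume the statement holds for spectra consisting of $\ell-1$ intervals and suppose $\Sigma(A)$ has the form (\ref{espectro}) with $\ell\geq 2$. First I would pick a point $\lambda$ in the last spectral gap $(b_{\ell-1},a_{\ell})$. Then $\lambda\notin\Sigma(A)$, so by Remark \ref{otra-MF} the system $x(n+1)=\lambda^{-1}A(n)x(n)$ admits an exponential dichotomy with projector $P_{\lambda}$ and constants $K\geq 1$, $\theta\in(0,1)$, and by Remark \ref{int-nospec} its rank $r:=\rank(P_{\lambda})$ satisfies $0<r<d$. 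Set $\mathcal{S}(n)=X(n)\range(P_{\lambda})$ and $\mathcal{U}(n)=X(n)\range(I-P_{\lambda})$, so that $\mathbb{R}^{d}=\mathcal{S}(n)\oplus\mathcal{U}(n)$ for each $n$; the identity $A(n)X(n)=X(n+1)$ yields the invariance $A(n)\mathcal{S}(n)=\mathcal{S}(n+1)$ and $A(n)\mathcal{U}(n)=\mathcal{U}(n+1)$. Writing $\widetilde{P}(n)=X(n)P_{\lambda}X(n)^{-1}$ for the projection onto $\mathcal{S}(n)$ along $\mathcal{U}(n)$, evaluating the estimates (\ref{A+}) at $n=k$ gives the uniform bounds $||\widetilde{P}(n)||\leq K$ and $||I-\widetilde{P}(n)||\leq K$.

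Next I would assemble the kinematic similarity from this invariant splitting. For each $n$, let $S(n)=[\,L(n)\mid M(n)\,]$, where the columns of $L(n)$ and of $M(n)$ are orthonormal bases of $\mathcal{S}(n)$ and $\mathcal{U}(n)$, respectively; then $S(n)$ is invertible and $||S(n)||\leq\sqrt{2}$, and since for $v\in\mathbb{R}^{d}$ the component of $v$ in $\mathcal{S}(n)$ is $\widetilde{P}(n)v$ and the one in $\mathcal{U}(n)$ is $(I-\widetilde{P}(n))v$, the coordinates of $S(n)^{-1}v$ are each bounded by $K|v|$, whence $||S(n)^{-1}||\leq\sqrt{2}\,K$. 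Thus $S$ is a genuine kinematic similarity. Under $y(n)=S(n)^{-1}x(n)$, the system (\ref{LinA}) becomes $y(n+1)=U(n)y(n)$ with $U(n)=S(n+1)^{-1}A(n)S(n)$; since $A(n)$ maps $\mathcal{S}(n)$ onto $\mathcal{S}(n+1)$ and $\mathcal{U}(n)$ onto $\mathcal{U}(n+1)$, while $S(n+1)^{-1}$ carries these two subspaces onto $\mathbb{R}^{r}\times\{0\}$ and $\{0\}\times\mathbb{R}^{d-r}$, the matrix $U(n)=\diag(C(n),D(n))$ is block diagonal. Because $S,S^{-1},A,A^{-1}$ are all bounded, $C$ and $D$ are invertible for every $n$ and satisfy \textbf{(P1)--(P2)}.

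It remains to locate $\Sigma(C)$ and $\Sigma(D)$. Since $S(n)^{-1}$ sends the invariant splitting $\mathcal{S}(n)\oplus\mathcal{U}(n)$ onto $(\mathbb{R}^{r}\times\{0\})\oplus(\{0\}\times\mathbb{R}^{d-r})$, Lemma \ref{preservacion1} shows that $y(n+1)=\lambda^{-1}\diag(C(n),D(n))y(n)$ has an exponential dichotomy with constant projector $\diag(I_{r},0)$; reading off the diagonal blocks, $x(n+1)=\lambda^{-1}C(n)x(n)$ has an exponential dichotomy with projector $I$ and $x(n+1)=\lambda^{-1}D(n)x(n)$ has one with projector $0$. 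Replacing $\lambda$ by any $\mu>\lambda$ only strengthens the contraction for $C$, and by any $\mu<\lambda$ the expansion for $D$, so $\Sigma(C)\subseteq(0,\lambda)$ and $\Sigma(D)\subseteq(\lambda,+\infty)$. On the other hand $A\simeq\diag(C,D)$, so Lemma \ref{KS-Spec} together with the elementary fact that a block-diagonal system has an exponential dichotomy exactly when each block does gives $\Sigma(C)\cup\Sigma(D)=\Sigma(A)$. As $b_{\ell-1}<\lambda<a_{\ell}$, comparing with (\ref{espectro}) forces $\Sigma(D)=[a_{\ell},b_{\ell}]$ and $\Sigma(C)=[a_{1},b_{1}]\cup\cdots\cup[a_{\ell-1},b_{\ell-1}]$. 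Finally, $C$ satisfies \textbf{(P1)--(P2)} and has a spectrum with $\ell-1$ intervals, so by the induction hypothesis $C\simeq\diag(B_{1},\dots,B_{\ell-1})$ with $\Sigma(B_{i})=[a_{i},b_{i}]$; setting $B_{\ell}=D$ and using that kinematic similarity is an equivalence relation finishes the proof. The delicate step is the second paragraph: the invariant splitting $\mathcal{S}(n)\oplus\mathcal{U}(n)$ is in general not orthogonal, so the uniform invertibility of $S(n)$ is not automatic and relies precisely on the dichotomy bound $||\widetilde{P}(n)||\leq K$.
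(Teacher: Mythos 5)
Your argument is correct and follows essentially the same route as the paper: pick a point in the last spectral gap, block-diagonalize along the ranges of the dichotomy projector, identify the spectra of the two blocks, and peel off $[a_{\ell},b_{\ell}]$ recursively. The only real differences are that you prove the block-diagonalization step from scratch (bounded invertibility of $S(n)$ via the uniform bound $\|\widetilde{P}(n)\|\leq K$) where the paper cites Lemma 1 of Papaschinopoulos, and that you locate the block spectra by monotonicity of the dichotomy in the weight $\mu$ rather than by the rank-counting of Remark \ref{int-nospec}; both you and the paper take for granted the standard fact that a block-diagonal system has an exponential dichotomy precisely when each block does.
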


\begin{proof}
Let us choose $\lambda_{\ell} \in (b_{\ell-1},a_{\ell})$ then the linear system
\begin{equation}
\label{ponderado}
x(n+1)=\lambda_{\ell}^{-1}A(n)x(n)
\end{equation}
has an exponential dichotomy with projector $P_{\lambda_{\ell}}$ with $\rank(P_{\lambda_{\ell}})=m_{\ell}<d$
(see Remark \ref{int-nospec} for details).

By using Lemma 1 from \cite{Papaschinopoulos}, we know that (\ref{ponderado}) is kinematically similar
to 
\begin{equation}
\label{ponderado-ks}
y(n+1) = \left (\begin{array}{ccc}
A_1(n) & & 0\\
0 & & A_2(n)
\end{array}
\right ) y(n),
\end{equation}
where $A_{1}\in \mathbb{R}^{m_{\ell}\times m_{\ell}}$ and  $A_{2}\in \mathbb{R}^{(d-m_{\ell})\times (d-m_{\ell})}$. In addition
(see \emph{e.g.}, \cite[p.281]{Kurzweil}), the subsystem
\begin{equation}
\label{ecu1}
y_{1}(n+1)=A_{1}(n)y_{1}(n) 
\end{equation}
has an exponential dichotomy on $\mathbb{Z}$ with the identity as a projector and 
\begin{equation}
\label{ecu2}
y_{2}(n+1)=A_{2}(n)y_{2}(n) 
\end{equation}
has an exponential dichotomy on $\mathbb{Z}$ with the null projector.

As a consequence of Lemmas \ref{KS-Spec} and \ref{intervalos-ponderados}, we know that
\begin{displaymath}
\Sigma(\lambda_{\ell}^{-1}A)=\bigcup\limits_{i=1}^{\ell}\Big[\frac{a_{i}}{\lambda_{\ell}},\frac{b_{i}}{\lambda_{\ell}}\Big]=\Sigma(A_{1})\cup \Sigma(A_{2}),  
\end{displaymath}
where 
\begin{displaymath}
\Sigma(A_{1})=\bigcup\limits_{i=1}^{j-1}\Big[\frac{a_{i}}{\lambda_{\ell}},\frac{b_{i}}{\lambda_{\ell}}\Big] \quad \textnormal{and} \quad
\Sigma(A_{2})=\bigcup\limits_{i=j}^{\ell}\Big[\frac{a_{i}}{\lambda_{\ell}},\frac{b_{i}}{\lambda_{\ell}}\Big]
\end{displaymath}
for some $j\in \{1,\ldots,\ell\}$.

Now, we will verify that $j=\ell$ and $\Sigma(A_{2})=[\frac{a_{\ell}}{\lambda_{\ell}},\frac{b_{\ell}}{\lambda_{\ell}}]$. Indeed, let us recall that
\begin{displaymath}
\Sigma(A_{2})=\Big\{\mu \in \mathbb{R}^{+}\colon u(n+1)=\mu^{-1}A_{2}(n)u(n) \quad \textnormal{has not an exponential dichotomy on $\mathbb{Z}$}\Big\} 
\end{displaymath}
and note that $1\notin \Sigma(A_{2})$ since (\ref{ecu2}) has an exponential dichotomy on $\mathbb{Z}$ with the null projector. In addition, let us recall that $1 \in (\frac{b_{\ell-1}}{\lambda_{\ell}},\frac{a_{\ell}}{\lambda_{\ell}})$. Now, 
if $j<\ell$ and $\mu \in (\frac{b_{\ell-2}}{\lambda_{\ell}},\frac{a_{\ell-1}}{\lambda_{\ell}})$,
the system
\begin{displaymath}
y_{2}(n+1)=\mu^{-1}A_{2}(n)y_{2}(n) 
\end{displaymath}
will have an exponential dichotomy with a projector $Q$ and statements (a)--(c) from Remark \ref{int-nospec} says 
that $\rank(Q)$ must be lower than zero, obtaining a contradiction.

As (\ref{ponderado}) and (\ref{ponderado-ks}) are kinematically similar, Lemma \ref{KS-escalar} implies that
(\ref{LinA}) is kinematically similar to
\begin{displaymath}
\begin{array}{rcl}
y(n+1) &=& \left (\begin{array}{ccc}
\lambda_{\ell} A_1(n) & & 0\\
0 & & \lambda_{\ell} A_2(n)
\end{array}
\right ) y(n)\\\\
&=& \left (\begin{array}{ccc}
B_{0}(n) & & 0\\
0 & & B_\ell(n)
\end{array}
\right ) y(n)
\end{array}
\end{displaymath}
and note that Lemma \ref{intervalos-ponderados} implies
\begin{displaymath}
\Sigma(B_{0})=\bigcup\limits_{i=1}^{\ell-1}\big[a_{i},b_{i}\big] \quad \textnormal{and} \quad
\Sigma(B_{\ell})=\big[a_{\ell},b_{\ell}\big].
\end{displaymath}

Now, let us consider the system
\begin{displaymath}
z(n+1)=B_{0}(n)z(n) 
\end{displaymath}
and take $\lambda_{\ell-1} \in (b_{\ell-2},a_{\ell-1})$. Then, the system
\begin{equation}
\label{eq-final}
w(n+1)=\frac{1}{\lambda_{\ell-1}}B_{0}(n)w(n) 
\end{equation}
has exponential dichotomy with projector $P_{\lambda_{\ell-1}}$ with $\rank(P_{\lambda_{\ell-1}})=m_{\ell-1}<m_{\ell}$.

The system (\ref{eq-final}) can be studied similarly as (\ref{ponderado}) in the paragraphs above and the proof can be achieved recursively.
\end{proof}

An important matter of spectral theory for differential \cite{Palmer2015} and difference \cite{Potzsche} nonautonomous systems 
is to determine sufficient conditions ensuring \emph{diagonal significance} in the P\"otzche's sense \cite{Potzsche}, namely that 
the spectrum $\Sigma(C)$ of an upper triangular system $u(n+1)=C(n)u(n)$ coincides with 
the union of the diagonal spectra $\Sigma(c_{ii})$. The following result provides an example of diagonal significance 
and will play a fundamental role in the proof of our main result. 

\begin{proposition}
\label{roughness1}
Let $C(n)=\{c_{ij}(n)\}$ be a bounded and upper triangular $d\times d$--matrix function such that $\Sigma(C)=[a,b]$. Then
\begin{displaymath}
\Sigma(C)=\bigcup\limits_{i=1}^{d}\Sigma(c_{ii}).
\end{displaymath}
\end{proposition}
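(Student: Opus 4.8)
The plan is to prove separately the two inclusions $\bigcup_{i=1}^{d}\Sigma(c_{ii})\subseteq\Sigma(C)$ and $\Sigma(C)\subseteq\bigcup_{i=1}^{d}\Sigma(c_{ii})$, the single‑interval hypothesis $\Sigma(C)=[a,b]$ entering essentially in both. For $\bigcup_{i}\Sigma(c_{ii})\subseteq[a,b]$ I would fix an index $i$ and show that every $\lambda\notin[a,b]$ lies outside $\Sigma(c_{ii})$. Consider the case $\lambda>b$ (the case $\lambda<a$ is symmetric, with the null projector). By Proposition \ref{invariance1}, $x(n+1)=\lambda^{-1}C(n)x(n)$ has an exponential dichotomy with projector $P=I$ and $K=1$, so (using Remark \ref{otra-MF}) its transition matrix $\lambda^{-(n-m)}X(n,m)$ satisfies $\|\lambda^{-(n-m)}X(n,m)\|\le\theta^{n-m}$ for $n\ge m$. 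Since $C$ is upper triangular, so is every $X(n,m)$, and its $(i,i)$ entry is exactly the transition $\phi_{i}(n,m)$ of the scalar equation $z(n+1)=c_{ii}(n)z(n)$, because the diagonal of a product (resp. of an inverse) of upper triangular matrices is the product (resp. the inverse) of the diagonals. Hence $|\lambda^{-(n-m)}\phi_{i}(n,m)|\le\theta^{n-m}$ for $n\ge m$, which is precisely an exponential dichotomy of $\lambda^{-1}c_{ii}$ with projector $1$; therefore $\lambda\notin\Sigma(c_{ii})$.

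For the reverse inclusion I would argue by contraposition: it suffices to prove that if $\lambda\notin\bigcup_{i}\Sigma(c_{ii})$ then $x(n+1)=\lambda^{-1}C(n)x(n)$ has an exponential dichotomy, for then $\lambda\notin\Sigma(C)=[a,b]$, and the contrapositive gives $[a,b]\subseteq\bigcup_{i}\Sigma(c_{ii})$. So suppose each $\lambda^{-1}c_{ii}$ has an exponential dichotomy. Then the diagonal system $y(n+1)=\lambda^{-1}\diag(c_{11}(n),\dots,c_{dd}(n))y(n)$ also has one, obtained as the direct sum of the $d$ scalar dichotomies (its projector being the diagonal matrix of the scalar projectors). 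The remaining point is to pass from the diagonal system to the full triangular one, whose off‑diagonal part need not be small; but it can be made arbitrarily small by a \emph{constant} scaling. Conjugating by $T_{\varepsilon}=\diag(1,\varepsilon,\dots,\varepsilon^{\,d-1})$ turns the entry $c_{ij}(n)$ into $c_{ij}(n)\varepsilon^{\,j-i}$, so the substitution $x=T_{\varepsilon}\hat x$ takes $x(n+1)=C(n)x(n)$ into $\hat x(n+1)=(D(n)+R_{\varepsilon}(n))\hat x(n)$, with $D=\diag(c_{11},\dots,c_{dd})$ and $\sup_{n}\|R_{\varepsilon}(n)\|\to0$ as $\varepsilon\to0^{+}$. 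For $\varepsilon$ small enough, the roughness of exponential dichotomies makes $\hat x(n+1)=\lambda^{-1}(D(n)+R_{\varepsilon}(n))\hat x(n)$ dichotomic. Since $T_{\varepsilon}$ is a constant invertible matrix one has $C\simeq D+R_{\varepsilon}$ (Definition \ref{KS}); hence $\lambda^{-1}C\simeq\lambda^{-1}(D+R_{\varepsilon})$ by Lemma \ref{KS-escalar}, and Lemma \ref{preservacion1} carries the exponential dichotomy back to $\lambda^{-1}C$, so $\lambda\notin\Sigma(C)$.

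I expect the genuinely clever step to be the one in the last paragraph: recognizing that the ``large off‑diagonal block'' obstruction is harmless because it can be absorbed by a \emph{time‑independent} kinematic similarity, after which roughness does the rest; this is exactly where the proof uses that kinematic similarity allows constant transformations and preserves both the Sacker--Sell spectrum and the existence of dichotomies. Everything else should be routine bookkeeping — the multiplicativity of the $(i,i)$ entries of products and inverses of triangular matrices, and the fact that a finite direct sum of dichotomic scalar systems is dichotomic. It is worth stating explicitly in the write‑up that the hypothesis $\Sigma(C)=[a,b]$ is used twice: it is what makes Proposition \ref{invariance1} available in the first inclusion, and it is what makes ``$\lambda\notin\Sigma(C)$'' contradict ``$\lambda\in[a,b]$'' in the second.
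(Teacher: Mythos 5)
Your proposal is correct and follows essentially the same route as the paper's proof: one inclusion via the constant $\beta$-scaling $\diag(1,\beta,\dots,\beta^{d-1})$ to shrink the off-diagonal part followed by roughness of exponential dichotomies, and the other via Proposition \ref{invariance1} together with the fact that the diagonal entries of the triangular transition matrix are the scalar transitions and are dominated by $\|\cdot\|_{\infty}$. No substantive differences to report.
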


\begin{proof}
Firstly, we will prove that 
$\Sigma(C)\subseteq \bigcup\limits_{i=1}^{d}\Sigma(c_{ii})$. Let $\lambda \notin \bigcup\limits_{i=1}^{d}\Sigma(c_{ii})$, then 
the diagonal system
\begin{displaymath}
x(n+1)=\lambda^{-1}\diag\big(c_{11}(n),\ldots,c_{dd}(n)\big)x(n) 
\end{displaymath}
has an exponential dichotomy. Now, let us consider the upper triangular system
\begin{equation}
\label{SPD}
x(n+1)=\lambda^{-1} C(n)x(n),
\end{equation}
where 
\begin{equation}
\label{CotaL}
C^{+}=\sup\limits_{n\in \mathbb{Z}}\|C(n)\|.
\end{equation}

Let us make the change of variables (usually known as $\beta$--transformation):
\begin{displaymath}
y(n)=D_{\beta}x(n)=\diag(1,\beta,\ldots,\beta^{d-1})^{-1}x(n) \quad \textnormal{where} \quad 0<\beta<\frac{\delta}{\delta+ C^{+}}
\end{displaymath}
and notice that
\begin{displaymath}
\begin{array}{rcl}
y(n+1)&=&D_{\beta}^{-1}x(n+1)\\\\
 &=&D_{\beta}^{-1}
\left(\begin{array}{cccc}
\frac{1}{\lambda}c_{11}(n) & c_{12}(n) & \cdots & c_{1d}(n)\\
                  & \ddots    &         & \vdots  \\
                  &           &  \ddots  & \vdots \\
                  &           &          & \frac{1}{\lambda}c_{dd}(n)
                  \end{array}\right)
D_{\beta}\,y(n)\\\\
&=&\left[
     \frac{1}{\lambda}\left(\begin{array}{cccc}
c_{11}(n) &  & & \\
                  & \ddots    &         &   \\
                  &           &  \ddots  &  \\
                  &           &          & c_{dd}(n)
                  \end{array}\right)+
\left(\begin{array}{cccc}
0 & \beta c_{12}(n) & \cdots & \beta^{d-1}c_{1d}(n)\\
  & \ddots         &        &  \vdots            \\
  & \ddots         & \ddots & \beta c_{d-1\, d}(n)   \\
  &                &        & 0
  \end{array}\right)\right]y(n).
\end{array}
\end{displaymath}

 By using $\|\cdot\|_{\infty}$ norm combined with $\beta<1$,  we can see that
 \begin{displaymath}
\left\| \left(\begin{array}{cccc}
0 & \beta c_{12}(n) & \cdots & \beta^{d-1}c_{1d}(n)\\
  & \ddots         &        &  \vdots            \\
  & \ddots         & \ddots & \beta c_{d-1\, d}(n)   \\
  &                &        & 0
  \end{array}\right)\right\|_{\infty}\leq \delta.
 \end{displaymath}
Due to roughness results for difference equations \cite[Corollary 3]{Aulbach1994}
(see also  \cite[p.276]{Palmer},\cite[Proposition 1]{Papaschinopoulos-1988}), we know that if $\delta$ is small enough, the system
 \begin{displaymath}
y(n+1)=\left[
   \frac{1}{\lambda}  \left(\begin{array}{cccc}
c_{11}(n) &  & & \\
                  & \ddots    &         &   \\
                  &           &  \ddots  &  \\
                  &           &          & c_{dd}(n)
                  \end{array}\right)+
\left(\begin{array}{cccc}
0 & \beta c_{12}(n) & \cdots & \beta^{d-1}c_{1d}(n)\\
  & \ddots         &        &  \vdots            \\
  & \ddots         & \ddots & \beta c_{d-1\, d}(n)   \\
  &                &        & 0
  \end{array}\right)\right]y(n)
 \end{displaymath}
 has an exponential dichotomy. By construction, this system is kinematically similar to (\ref{SPD}) 
 and Lemma \ref{preservacion1} says that (\ref{SPD}) has also an exponential dichotomy, which implies that 
 $\lambda \notin \Sigma(C)$ and $\Sigma(C)\subseteq \bigcup\limits_{i=1}^{d}\Sigma(C_{ii})$ follows.

 Secondly, we will prove that $\bigcup\limits_{i=1}^{d}\Sigma(c_{ii})\subseteq \Sigma(C)$.  
 Let $\lambda \notin \Sigma(C)=[a,b]$ such that $\lambda>b$. By Proposition  \ref{invariance1},
  we have that the system (\ref{SPD}) has exponential dichotomy with projection $P=I$. That is, the fundamental matrix 
  of (\ref{SPD}), namely $X_{\lambda}$,
  satisfies
  \begin{displaymath}
  \left\|X_{\lambda}(n)X_{\lambda}^{-1}(k)\right\|_{\infty}\leq \rho^{n-k} \quad \textnormal{with $n\geq k$}.
  \end{displaymath}

 Let us recall that $X_{\lambda}(n)=X(n)\lambda^{-n}$, where $X(n)$ is the fundamental matrix of the  
 system $x(n+1)=C(n)x(n)$.  Now, as $C(n)$ is upper triangular, we can see with the help of (\ref{TM}) that 
  \begin{displaymath}
  X_{\lambda}(n)X_{\lambda}^{-1}(k)=\left\{
\begin{array}{ccl} 
 \lambda^{k-n}C(n-1)C(n-2)\cdots C(k)  & \textnormal{if} & n>k \\
 I                        & \textnormal{if} & n=k,    
\end{array}\right.
\end{displaymath}
which implies that
  
   \begin{displaymath}
   \lambda^{k-n}\prod\limits_{j=k}^{n-1}|c_{ii}(j)| \leq  \left\|X_{\lambda}(n)X_{\lambda}^{-1}(k)\right\|_{\infty}\leq \rho^{n-k} 
   \quad \textnormal{for any} \quad i=1,\ldots,d,
   \end{displaymath}
and we conclude that each scalar difference equation $x_{i}(n+1)=\lambda^{-1}c_{ii}(n)x_{i}(n)$  ($i=1,\ldots,d$) has 
an exponential dichotomy with projection $1$, which implies that $\lambda \notin \bigcup\limits_{i=1}^{n}\Sigma(c_{ii})$.

   The case $\lambda <a$ can be proved analogously, thus  $\bigcup\limits_{i=1}^{n}\Sigma(c_{ii})\subseteq \Sigma(C)$ and the 
   Proposition follows.
\end{proof}

\begin{proposition}
\label{minimal}
If the linear system (\ref{LinA}) satisfies \textnormal{\textbf{(P1)--(P2)}} and can be contracted to a compact set $E\subset (0,+\infty)$, 
then $\Sigma(A)\subseteq E$.
\end{proposition}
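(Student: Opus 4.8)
The plan is to prove the contrapositive in spirit: if $\lambda \notin E$, then $\lambda \notin \Sigma(A)$, i.e.\ the $\lambda$--weighted system $x(n+1)=\lambda^{-1}A(n)x(n)$ admits an exponential dichotomy. Since $E$ is compact and contained in $(0,+\infty)$, its complement in $(0,+\infty)$ is open, so a point $\lambda\notin E$ lies in some maximal open interval disjoint from $E$; in particular there is a neighbourhood of $\lambda$ avoiding $E$. By hypothesis (\ref{LinA}) can be contracted to $E$, so for every $\delta>0$ it is $\delta$--kinematically similar to a diagonal system $y(n+1)=\diag(C_1(n),\ldots,C_d(n))\{I+B(n)\}y(n)$ with all $C_i(n)\in E$ and $\|B(n)\|\le\delta$.

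First I would fix $\lambda\notin E$ and choose $\varepsilon>0$ so small that the closed $\varepsilon$--ball around $\lambda$ still misses $E$; since $E\subset\mathbb{R}^+$ is compact, each scalar sequence $n\mapsto C_i(n)$ stays in $E$, so $\lambda^{-1}C_i(n)$ is uniformly bounded away from $1$ — more precisely $\lambda^{-1}C_i(n)$ lies in a compact subset of $(0,\infty)$ not containing $1$, and in fact either $\sup_n \lambda^{-1}C_i(n)<1$ or $\inf_n\lambda^{-1}C_i(n)>1$ need \emph{not} hold globally, so here is the subtlety. The clean way around it: use Lemma~\ref{KS-Spec} and Lemma~\ref{KS-escalar}, which give $\Sigma(A)=\Sigma(\diag(C_1,\ldots,C_d)\{I+B\})$; then apply a roughness argument (as in the proof of Proposition~\ref{roughness1}, via \cite[Corollary 3]{Aulbach1994}) to absorb the factor $I+B(n)$ when $\delta$ is small, reducing the question to $\Sigma(\diag(C_1,\ldots,C_d))=\bigcup_{i=1}^d \Sigma(c_{ii})$ with $c_{ii}(n)=C_i(n)\in E$.

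Next I would show that for each scalar equation $x_i(n+1)=\lambda^{-1}C_i(n)x_i(n)$ with $C_i(n)\in E\subseteq[\alpha,\beta]$ (here $\alpha=\min E$, $\beta=\max E$), one has $\Sigma(c_{ii})\subseteq[\alpha,\beta]$: indeed if $\mu>\beta$ then $\mu^{-1}C_i(n)\le \beta/\mu<1$ uniformly, so $\prod_{k}^{n-1}\mu^{-1}C_i(j)$ decays geometrically and the scalar system has an exponential dichotomy with projector $P=1$; symmetrically for $\mu<\alpha$ one gets $P=0$. Hence $\Sigma(c_{ii})\subseteq[\alpha,\beta]\subseteq$ (the convex hull of) $E$. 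Combined with the previous paragraph this yields $\Sigma(A)\subseteq \bigcup_i\Sigma(c_{ii})\subseteq[\min E,\max E]$, which is weaker than $\Sigma(A)\subseteq E$. To upgrade to $\Sigma(A)\subseteq E$ I would argue pointwise in $\lambda$: if $\lambda\notin E$, pick the maximal open interval $(p,q)\ni\lambda$ with $(p,q)\cap E=\emptyset$; then for \emph{this} $\lambda$ every $C_i(n)$ avoids $(p,q)$, so $C_i(n)\in E\subseteq (0,p]\cup[q,\infty)$, and I partition the indices (and, after applying the block-diagonalisation of Proposition~\ref{bloques} if needed) into those with $\sup_n C_i(n)\le p$ and those with $\inf_n C_i(n)\ge q$ — this is legitimate because the connected set of values $\{C_i(n):n\in\mathbb{Z}\}$ cannot straddle the gap $(p,q)$... except $\{C_i(n):n\}$ need not be connected, so instead I simply observe $C_i(n)\le p$ or $C_i(n)\ge q$ for \emph{each} $n$ and each $i$, which already forces, for the scalar equation, that $\lambda^{-1}C_i(n)<1$ whenever $C_i(n)\le p<\lambda$ and $\lambda^{-1}C_i(n)>1$ whenever $C_i(n)\ge q>\lambda$; the dichotomy of a scalar equation with coefficients bounded away from $1$ on each side can fail only if the coefficient switches sides, but a switch from $<1$ to $>1$ requires passing through $(p/\lambda,q/\lambda)\ni 1$, contradiction. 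So $\lambda^{-1}C_i(n)$ stays on one side of $1$, the scalar equation has an exponential dichotomy, hence $\lambda\notin\Sigma(c_{ii})$ for all $i$, hence $\lambda\notin\Sigma(A)$.

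The main obstacle is precisely this last bookkeeping step: turning the \emph{uniform} membership $C_i(n)\in E$ into a \emph{uniform} separation of $\lambda^{-1}C_i(n)$ from $1$, honestly handling the roughness perturbation $I+B(n)$ (which is why $\delta$ must be chosen small relative to $\mathrm{dist}(\lambda,E)$ and the dichotomy constants — consistent with Remark~(a) after Theorem~\ref{Main2}), and making sure the block structure does not interfere. I expect the cleanest writeup routes everything through Proposition~\ref{roughness1} and Proposition~\ref{invariance1} rather than re-deriving scalar dichotomies by hand. I would end by noting $\lambda\notin E \Rightarrow \lambda\notin\Sigma(A)$, equivalently $\Sigma(A)\subseteq E$, which is the claim.
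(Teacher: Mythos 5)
Your strategy is the same as the paper's: fix $\lambda\notin E$, set $\alpha=\operatorname{dist}(\lambda,E)>0$, pass to the $\delta$--kinematically similar contracted form $\diag(C_1,\ldots,C_d)\{I+B\}$, show that the weighted diagonal system $y(n+1)=\lambda^{-1}\diag(C_1(n),\ldots,C_d(n))y(n)$ has an exponential dichotomy, absorb $I+B(n)$ by roughness, and transport the dichotomy back through the similarity via Lemma \ref{preservacion1}. You also correctly isolate the only delicate point: for the scalar equation $x_i(n+1)=\lambda^{-1}C_i(n)x_i(n)$ to have a dichotomy it is not enough that $\lambda^{-1}C_i(n)$ avoids a neighbourhood of $1$ for each $n$; it must stay on \emph{one} side of $1$ for all $n\in\mathbb{Z}$. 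The gap is in your resolution of this point. You argue that a switch from $<1$ to $>1$ ``requires passing through $(p/\lambda,q/\lambda)\ni 1$,'' but there is no intermediate value theorem for sequences: $C_i(\cdot)$ takes values in the disconnected set $E\cap\big((0,p]\cup[q,\infty)\big)$ and can jump across the gap between consecutive integers. And when it does switch, the conclusion genuinely fails, not merely your proof of it: the paper's own example (\ref{baby1}), with $a_n=a$ for $n\le-1$ and $a_n=b$ for $n\ge 0$, is contracted to the compact set $\{a,b\}$, yet for $a<\lambda<b$ the solution $x(n)=\lambda^{-n}\prod a_j$ grows in both time directions, so there is no dichotomy and $\lambda\in\Sigma=[a,b]\not\subseteq\{a,b\}$.

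So the step you flagged as ``the main obstacle'' is indeed the obstacle, and your fix does not close it; without an additional argument (or hypothesis) forcing each $C_i(\cdot)$ to remain in a single ``side'' of $\lambda$, the chain $\lambda\notin E\Rightarrow\lambda\notin\Sigma(c_{ii})\Rightarrow\lambda\notin\Sigma(A)$ breaks at the first implication. For what it is worth, the paper's own proof disposes of exactly this point by writing ``without loss of generality, we can assume that $C_i(n)<\lambda$ for $i=1,\ldots,m$ and $C_i(n)>\lambda$ for $i=m+1,\ldots,d$,'' i.e.\ it assumes the side-constancy you were unable to derive; so you have reproduced the paper's argument together with its unaddressed weak point. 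A correct repair would have to either restrict $E$ (e.g.\ replace $E$ by the union of the convex hulls of the values actually attained by each $C_i$), or show from the definition of contractibility that the diagonal entries of a contracted form cannot oscillate across a gap of $\Sigma(A)$; neither is done in your proposal (nor in the paper). The remaining ingredients of your writeup --- the use of Lemmas \ref{KS-escalar} and \ref{KS-Spec}, the roughness step with $\delta$ small relative to $\alpha$ and the dichotomy constants, and the preliminary bound $\Sigma(A)\subseteq[\min E,\max E]$ via Proposition \ref{invariance1} --- are all sound and consistent with the paper.
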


\begin{proof}
Let us choose $\lambda \notin E$ and notice that the compactness of $E$ allow to define
$\alpha=\inf_{x\in E}|\lambda-x|>0$.
By using Definition \ref{contractibilidad}, we have that (\ref{LinA}) is kinematically similar to
\begin{displaymath}
y(n+1)=\diag(C_{1}(n),\ldots,C_{d}(n))\{I+B(n)\}y(n),
\end{displaymath}
where  $C_{i}(n)\in E$ for any $n\in \mathbb{Z}$ and $\sup\limits_{n\in \mathbb{Z}}\|B(n)\|<\delta/||C||$. Now, by Lemma \ref{KS-escalar} we know that 
(\ref{pond-lamb}) is $\delta$--kinematically similar to
\begin{equation}
\label{lambda2}
y(n+1)=\frac{1}{\lambda}\diag(C_{1}(n),\ldots,C_{d}(n))\{I+B(n)\}y(n).
\end{equation}

Since $C_{i}(n)\in E$ for any $n\in \mathbb{Z}$ and $i=1,\ldots,d$, without loss of generality, we can assume that
\begin{displaymath}
\begin{array}{rcl}
C_{i}(n)<\lambda & \textnormal{if}& i=1,\ldots,m\\
C_{i}(n)>\lambda & \textnormal{if} & i=m+1,\ldots,d.
\end{array}
\end{displaymath}

By definition of $\alpha$, we have that
\begin{displaymath}
\begin{array}{rcl}
C_{i}(n)<\lambda-\alpha & \textnormal{if}& i=1,\ldots,m\\
C_{i}(n)>\lambda+\alpha & \textnormal{if} & i=m+1,\ldots,d.
\end{array}
\end{displaymath}

We can verify that the system
\begin{displaymath}
y(n+1)=\frac{1}{\lambda}\diag(C_{1}(n),\ldots,C_{d}(n))y(n),
\end{displaymath}
has an exponential dichotomy on $\mathbb{Z}$ since
\begin{displaymath}
\begin{array}{rcl}
|(\frac{1}{\lambda})^{n-k}\prod\limits_{j=k}^{n-1}C_{i}(j)|\leq \max\{\frac{\lambda-\alpha}{\lambda},\frac{\lambda}{\lambda+\alpha}\}^{n-k} &\textnormal{if}& n\geq k, \quad i=1,\ldots, m\\\\
|(\frac{1}{\lambda})^{n-k}\prod\limits_{j=n}^{k-1}C_{i}^{-1}(j)|\leq \max\{\frac{\lambda-\alpha}{\lambda},\frac{\lambda}{\lambda+\alpha}\}^{k-n} &\textnormal{if}&  n\leq k, \quad (i=m+1,\ldots,d).
\end{array}
\end{displaymath}

By using roughness results, we have that (\ref{lambda2}) has an exponential dichotomy on $\mathbb{Z}$. Now, due to
kinematical similarity and Lemma \ref{preservacion1}, the system (\ref{pond-lamb}) has exponential dichotomy. In 
consequence, $\lambda\notin \Sigma(A)$ and 
the Proposition follows.
\end{proof}

\section{Proof of Main results}

\subsection{Proof of Theorem \ref{Main}} The proof will be made in several steps:

\noindent\emph{Step 1): (\ref{LinA}) is kinematically similar to an upper triangular system:} By Proposition \ref{Compactness-SSS} and 
hypothesis, there exists a positive integer $\ell \leq d$ such that:
$$
\Sigma(A)=\bigcup\limits_{i=1}^{\ell}[a_{i},b_{i}], \quad \textnormal{with $0<a_{1}\leq b_{1}<a_{2}\leq b_{2}<\cdots<a_{\ell}\leq b_{\ell}<+\infty$}.
$$

By Proposition \ref{bloques}, we know that (\ref{LinA})
is kinematically similar to (\ref{diag2}), where $B_{i}(n)$ are bounded 
$m_{i}\times m_{i}$ matrix functions and $\Sigma(B_{i})=[a_{i},b_{i}]$ ($i=1,\ldots,\ell$). 

By using the method of QR factorization, we know that, for each $i\in\{1,\ldots,\ell\}$, the
systems
\begin{equation}
\label{6B}
x_{i}(n+1)=B_{i}(n)x_{i}(n)
\end{equation}
are kinematically similar to the $m_{i}\times m_{i}$ upper triangular systems 
\begin{equation}
\label{6D}
y_{i}(n+1)=C_{i}(n)y_{i}(n),
\end{equation}
where $\Sigma(C_{i})=[a_{i},b_{i}]$.

\bigskip

\noindent\emph{Step 2) Exponential dichotomy of scalar difference systems:} From now on, the diagonal terms of the upper triangular matrices $C_{i}$
described in (\ref{6D}) will be denoted by $\{c_{i_{rr}}\}_{r=1}^{m_{i}}$, where $i$ is a fixed 
element of $\{1,\ldots,\ell\}$. Now, by Proposition \ref{roughness1}, we know that the upper triangular system 
(\ref{6D}) has the property of diagonal significance,
that is
$$
\bigcup\limits_{r=1}^{m_{i}}\Sigma(c_{i_{rr}})=[a_{i},b_{i}].
$$

By Proposition \ref{invariance1}, for any $\delta\in (0,2a_{1})$, the scalar difference equation
\begin{equation}
\label{primera}
u(n+1)=\frac{c_{i_{rr}}(n)}{\big(a_{i}-\frac{\delta}{2}\big)}u(n)
\end{equation}
has exponential dichotomy with projector $P=0$ and
\begin{equation}
\label{segunda}
s(n+1)=\frac{c_{i_{rr}}(n)}{\big(b_{i}+\frac{\delta}{2}\big)}s(n)
\end{equation}
has exponential dichotomy with constant $K=1$ and projector $P=1$. In consequence, there exists $\theta \in (0,1)$ such that
\begin{equation}
\label{CotaKS}
\left\{\begin{array}{rcl}
|U(n)U^{-1}(k)| \leq \theta^{k-n} &\textnormal{if}& k\geq n\\\\
|S(n)S^{-1}(k)| \leq \theta^{n-k} &\textnormal{if} & n\geq k.
\end{array}\right.
\end{equation}

Where
\begin{displaymath}
U(n)=\left\{\begin{array}{ccl}
\displaystyle \prod\limits_{j=0}^{n-1}\bigg(\frac{c_{i_{rr}}(j)}{a_{i}-\frac{\delta}{2}}\bigg)&\textnormal{if}& n\geq 1\\\\
1 & \textnormal{if}& n=0\\\\
\displaystyle \prod\limits_{j=n}^{-1}\bigg(\frac{c_{i_{rr}}(j)}{a_{i}-\frac{\delta}{2}}\bigg)^{-1} &\textnormal{if}& n< 0
\end{array}\right. \quad \textnormal{and} \quad
S(n)=\left\{\begin{array}{ccl}
\displaystyle \prod\limits_{j=0}^{n-1}\bigg(\frac{c_{i_{rr}}(j)}{b_{i}+\frac{\delta}{2}}\bigg)&\textnormal{if}& n\geq 1\\\\
1 & \textnormal{if}& n=0\\\\
\displaystyle \prod\limits_{j=n}^{-1}\bigg(\frac{c_{i_{rr}}(j)}{b_{i}+\frac{\delta}{2}}\bigg)^{-1} &\textnormal{if}& n< 0
\end{array}\right.
\end{displaymath}
are the fundamental matrices of (\ref{primera}) and (\ref{segunda}) respectively.

\bigskip

\noindent\emph{Step 3) A technical result:} 
\begin{lemma}
\label{technical-result}
For any fixed $i\in \{1,\ldots,\ell\}$, there exist two sequences $h_{i}(n)$ and $\Delta_{i}(n)$ such that
\begin{equation}
\label{Cota21}
a_{i}\leq h_{i}(n)\leq b_{i} \quad \textnormal{and} \quad |\Delta_{i}(n)|\leq  \frac{\delta}{2} \quad \textnormal{for any $n\in \mathbb{Z}$}
\end{equation}
and there exists $0<M_{2}<1<M_{1}$ verifying
\begin{equation}
\label{Cota22}
M_{2}\leq \prod\limits_{j=0}^{n-1}\Big|\frac{c_{i_{rr}}(j)}{h_{i}(j)+\Delta_{i}(j)}\Big| \leq M_{1}, \quad \textnormal{if $n\geq 1$} 
\end{equation}
and
\begin{equation}
\label{Cota222}
M_{2}\leq \prod\limits_{j=n}^{-1}\Big|\Big(\frac{c_{i_{rr}}(j)}{h_{i}(j)+\Delta_{i}(j)}\Big)^{-1}\Big| \leq M_{1}, \quad \textnormal{if $n\leq -1$} 
\end{equation}
for any $r\in \{1,\ldots,m_{i}\}$.
\end{lemma}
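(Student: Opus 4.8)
The plan is to read off sharp \emph{pointwise} bounds on the diagonal coefficients $c_{i_{rr}}$ directly from the dichotomies produced in Step~2, and then to obtain $h_i$ and $\Delta_i$ by simply clamping $|c_{i_{rr}}(n)|$ to the spectral interval $[a_i,b_i]$. Throughout, $i$ is fixed and I carry out the construction for one fixed index $r\in\{1,\dots,m_i\}$ (so that, strictly speaking, $h_i$ and $\Delta_i$ also depend on $r$; a single pair cannot serve all $r$ at once, since the partial products $\prod_{j=k}^{n-1}|c_{i_{rr}}(j)|$ need not stay comparable as $r$ varies). Because there are only finitely many pairs $(i,r)$, the constants $M_1,M_2$ produced below may be chosen independently of $r$.

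First I would record pointwise estimates on $|c_{i_{rr}}(n)|$. By Proposition~\ref{invariance1} the scalar equations (\ref{primera}) and (\ref{segunda}) admit exponential dichotomies with constant $K=1$, which is exactly what underlies (\ref{CotaKS}). Since $U(n+1)/U(n)=c_{i_{rr}}(n)/(a_i-\delta/2)$ and $S(n+1)/S(n)=c_{i_{rr}}(n)/(b_i+\delta/2)$ for every $n\in\mathbb{Z}$, taking $k=n+1$ in the first line of (\ref{CotaKS}) gives $(a_i-\delta/2)/|c_{i_{rr}}(n)|\leq\theta$, and taking $n=k+1$ in the second line gives $|c_{i_{rr}}(n)|/(b_i+\delta/2)\leq\theta$. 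Hence
\begin{displaymath}
\frac{a_i-\delta/2}{\theta}\ \leq\ |c_{i_{rr}}(n)|\ \leq\ \theta\Big(b_i+\frac{\delta}{2}\Big)\qquad\textnormal{for every }n\in\mathbb{Z},
\end{displaymath}
and since $\theta\in(0,1)$ and $\delta\in(0,2a_{1})\subseteq(0,2a_i)$ (so that $a_i-\delta/2>0$), this forces $a_i-\delta/2<|c_{i_{rr}}(n)|<b_i+\delta/2$ for all $n$.

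Next I would define the sequences by projecting onto the spectral interval: put $h_i(n)=\min\{b_i,\max\{a_i,|c_{i_{rr}}(n)|\}\}$ and $\Delta_i(n)=|c_{i_{rr}}(n)|-h_i(n)$. Then $a_i\leq h_i(n)\leq b_i$ by construction, and since $|c_{i_{rr}}(n)|$ lies within distance $\delta/2$ of $[a_i,b_i]$ by the previous step, $|\Delta_i(n)|\leq\delta/2$; this proves (\ref{Cota21}). Moreover $h_i(n)+\Delta_i(n)=|c_{i_{rr}}(n)|$ identically, so each factor occurring in (\ref{Cota22}) and in (\ref{Cota222}) equals $|c_{i_{rr}}(j)|/|c_{i_{rr}}(j)|=1$; both products are therefore constantly equal to $1$, and (\ref{Cota22})--(\ref{Cota222}) hold with, for instance, $M_2=1/2$ and $M_1=2$, uniformly in $n$ and in $r$.

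The only point that really requires care — the ``hard part'', insofar as there is one — is the first step: one must notice that the normalizing factors $a_i-\delta/2$ and $b_i+\delta/2$ in (\ref{primera})--(\ref{segunda}) have been chosen precisely so that the $K=1$ dichotomies, specialized to consecutive indices, pin $|c_{i_{rr}}(n)|$ pointwise inside the enlarged interval $(a_i-\delta/2,\,b_i+\delta/2)$. That enlargement is exactly the room needed to absorb the error $\Delta_i(n)$ while keeping $|\Delta_i(n)|\leq\delta/2$ and $h_i(n)\in[a_i,b_i]$; once this observation is in hand the rest of the lemma is a one-line verification.
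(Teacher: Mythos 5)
Your construction does not prove the lemma as stated, for two reasons, the second of which is the more fundamental.

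First, the quantifier ``for any $r\in\{1,\dots,m_i\}$'' is not cosmetic: Step~4 of the main proof needs \emph{one} pair $(h_i,\Delta_i)$ serving all $r$ simultaneously, because the conjugated matrix $\Lambda_i(n)=L_i^{-1}(n+1)C_i(n)L_i(n)$ must have \emph{every} diagonal entry equal to the common value $h_i(n)+\Delta_i(n)$ in order to split $\Gamma_i(n)=h_i(n)I+R_i(n)$ with $R_i$ small. Your choice $h_i(n)+\Delta_i(n)=|c_{i_{rr}}(n)|$ varies with $r$, so it yields a weaker statement that cannot be fed into Step~4; and your parenthetical claim that a single pair cannot serve all $r$ is precisely what the paper's Appendix~A sets out to refute. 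There, $h_i(j)+\Delta_i(j)$ is taken piecewise constant with only the two values $a_i-\tfrac{\delta}{2}$ and $b_i+\tfrac{\delta}{2}$, alternating on blocks $\{N_l,\dots,N_{l+1}-1\}$; on the first kind of block the running product is nondecreasing (it coincides with $|U(n)U^{-1}(N_l)|\ge 1$ by (\ref{CotaKS2})) and on the second kind it is nonincreasing (it coincides with $|S(n)S^{-1}(N_{l+1})|\le 1$), and the switching times $N_p$ are chosen inductively so that the product never leaves $[1/\mu,\mu]$. That alternating mechanism, not a pointwise identification of the denominator with $|c_{i_{rr}}(j)|$, is the content of the lemma.

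Second, the pointwise bound $a_i-\tfrac{\delta}{2}<|c_{i_{rr}}(n)|<b_i+\tfrac{\delta}{2}$, on which everything in your argument rests, is not actually available. It is obtained by specializing the dichotomy estimates to \emph{consecutive} indices, which extracts pointwise information only if the dichotomy constant is literally $K=1$. But $\Sigma(c_{i_{rr}})\subseteq[a_i,b_i]$ cannot control individual coefficients: replace $c_{i_{rr}}$ at the single time $n=0$ by an arbitrarily large value and the Sacker--Sell spectrum is unchanged while your inequality fails. Correspondingly, the $K=1$ appearing in (\ref{CotaKS}) is imported from Proposition~\ref{invariance1}, whose proof establishes $K=1$ only for the auxiliary weight $h=\max\{L+2,\lambda\}$ and then passes to $\lambda$ via Remark~\ref{int-nospec} --- a step that preserves the projector but not the constant. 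What a dichotomy genuinely provides, and what the appendix actually uses, is control of transition products over long ranges up to a multiplicative constant: $|U(n)|\to\infty$, $|S(n)S^{-1}(k)|\to 0$, and two-sided bounds on block products. Your argument is not robust to replacing $K=1$ by $K>1$ (the one-step bound then only yields $|c_{i_{rr}}(n)|\in\big[(a_i-\tfrac{\delta}{2})/(K\theta),\,K\theta(b_i+\tfrac{\delta}{2})\big]$, which no longer keeps $|\Delta_i(n)|\le\tfrac{\delta}{2}$), whereas the paper's block argument is. Indeed, if your pointwise bound were true, the whole of Appendix~A would be superfluous --- a strong hint that the step is too good to be true.
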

\begin{proof}
See Appendix A.
\end{proof}

As a consequence of this result, we construct the $m_{i}\times m_{i}$ matrix:
\begin{displaymath}
L_{i}(n)=\diag\Big(\mu_{1}(n),\ldots,\mu_{m_{i}}(n)\Big), 
\end{displaymath}
where for any $r\in \{1,\ldots,m_{i}\}$, it follows that
\begin{displaymath}
\mu_{r}(n)=\left\{\begin{array}{ccl}
                 \displaystyle \prod\limits_{j=0}^{n-1}\frac{c_{i_{rr}}(j)}{h_{i}(j)+\Delta_{i}(j)} & \textnormal{if} & n\geq 0 \\\\
                 \displaystyle  \prod\limits_{j=n}^{-1}\Big(\frac{c_{i_{rr}}(j)}{h_{i}(j)+\Delta_{i}(j)}\Big)^{-1} & \textnormal{if} & n\leq -1
                 \end{array}\right.\quad \textnormal{for any $n\in \mathbb{Z}$}.
\end{displaymath}

By Lemma \ref{technical-result}, we have that 
\begin{equation}
\label{NormaL}
||L_{i}(n)||_{\infty}\leq M_{2} \hspace{0.2cm} \textnormal{and} \hspace{0.2cm} ||L_{i}^{-1}(n)||_{\infty}\leq M_{1}\hspace{0.2cm} \textnormal{for any $n\in \mathbb{Z}$}.
\end{equation}

\bigskip

\noindent\emph{Step 4) The systems (\ref{6B}) can be contracted to $[a_{i},b_{i}]$ for any $i=1,\ldots,\ell$:}  The system (\ref{6D})
is kinematically similar to 
\begin{equation}
\label{9}
z_{i}(n+1)=\Lambda_{i}(n)z_{i}(n),
\end{equation}
with $y_{i}(n)=L_{i}(n)z_{i}(n)$, where $\Lambda_{i}(n)=L_{i}^{-1}(n+1)C_{i}(n)L_{i}(n)$ is a $m_{i}\times m_{i}$ matrix 
whose $rs$--coefficient is defined by
\begin{displaymath}
\{\Lambda_{i}(n)\}_{rs}=\left\{ 
\begin{array}{ccl}
\displaystyle h_{i}(n)+\Delta_{i}(n) & \textnormal{if}&  r=s\\\\
\displaystyle \frac{\mu_{s}(n)}{\mu_{r}(n+1)}c_{rs}(n) &\textnormal{if} & 1\leq r<s\leq m_{i}\\\\
0 &\textnormal{if} & 1\leq s<r\leq m_{i}.
\end{array}\right.
\end{displaymath}

Let us define the $\beta$--transformation
\begin{displaymath}
z_{i}(n)=\diag\big(1,\beta,\ldots,\beta^{m_{i}-1}\big)w_{i}(n), 
\end{displaymath}
where
\begin{equation}
\label{cota-beta}
0<\beta<\frac{\delta}{\delta +\frac{2 C^{+}M_{1}}{M_{2}}}, 
\end{equation}
and $C^{+}$ is defined as in (\ref{CotaL}). 

Now, we can see that (\ref{9}) and (\ref{6D}) are $\delta$--kinematically similar to
\begin{displaymath}
w_{i}(n+1)=\Gamma_{i}(n)w_{i}(n), 
\end{displaymath}
where the $rs$--coefficient of $\Gamma_{i}(n)$ is
\begin{displaymath}
\{\Gamma_{i}(n)\}_{rs}=\left\{ 
\begin{array}{ccl}
\big\{\Lambda_{i}(n)\big\}_{rs} & \textnormal{if}&  r=s\\\\
\beta^{s-r}\big\{\Lambda_{i}(n)\big\}_{rs} \ &\textnormal{if} & 1\leq r<s\leq m_{i}\\\\
0 &\textnormal{if} & 1\leq s<r\leq m_{i}.
\end{array}\right.
\end{displaymath}

Let us observe that $\Gamma_{i}(n)$ can be written as follows:
\begin{displaymath}
\Gamma_{i}(n)=h_{i}(n)I+R_{i}(n), 
\end{displaymath}
where the $rs$--coefficient of $R_{i}(n)$ is defined by
\begin{displaymath}
\{R_{i}(n)\}_{rs}=\left\{ 
\begin{array}{ccl}
\displaystyle \Delta_{i}(n) & \textnormal{if}&  r=s\\\\
\displaystyle \beta^{s-r}\frac{\mu_{s}(n)}{\mu_{r}(n+1)}c_{rs}(n) &\textnormal{if} & 1\leq r<s\leq m_{i}\\\\
0 &\textnormal{if} & 1\leq s<r\leq m_{i}.
\end{array}\right.
\end{displaymath}
By (\ref{Cota21}), (\ref{Cota22}) and (\ref{Cota222}), we can verify that 
\begin{displaymath}
\begin{array}{rcl}
||R_{i}(n)|| & \displaystyle \leq & \frac{\delta}{2}+\frac{M_{1}C^{+}}{M_{2}}(\beta+\beta^{2}+\ldots+\beta^{m_{i}})\\\\
             & \displaystyle \leq & \frac{\delta}{2}+ \frac{M_{1}C^{+}}{M_{2}}\frac{\beta}{1-\beta}
\end{array}
\end{displaymath}
and by using (\ref{cota-beta}) it follows that $||R_{i}(n)||_{\infty}<\delta$ for any $n\in \mathbb{Z}$.

\bigskip

Summarizing, for any $i=1,\ldots,\ell$, the system (\ref{6B}) is $\delta$--kinematically similar
to 
\begin{displaymath}
w_{i}(n+1)=\diag(h_{i}(n),\ldots,h_{i}(n))\big\{I+h_{i}^{-1}(n)R_{i}(n)\big\}w_{i}(n),
\end{displaymath}
where $h_{i}(n)\subseteq [a_{i},b_{i}]=\Sigma(B_{i})$ and $||\frac{1}{h_{i}(n)}R_{i}(n)||_{\infty}<\frac{\delta}{|b_{i}|}$
and the contractibility of (\ref{6B}) follows.

\bigskip

\noindent\emph{Step 5) The system (\ref{LinA}) can be contracted to $\Sigma(A)$:}
By using the previous result, we can see that (\ref{LinA}) is $\tilde{\delta}$--kinematically similar 
(with $\tilde{\delta}=\delta/ |b_{1}|$) to
\begin{displaymath}
z(n+1)=\diag(H_{1}(n),\ldots,H_{\ell}(n))\{I+R(n)\}z(n), 
\end{displaymath}
where 
\begin{displaymath}
H_{i}(n)=\diag\big(h_{i}(n),\ldots,h_{i}(n)\big) \quad \textnormal{and} \quad
R(n)=\diag\big(h_{1}^{-1}(n)R_{1}(n),\ldots,h_{\ell}^{-1}(n)R_{\ell}(n)\big)
\end{displaymath}
for any $i=1,\ldots,\ell$. Finally, note that $H(n)\subseteq \bigcup\limits_{i=1}^{\ell}[a_{i},b_{i}]=\Sigma(A)$ 
and $||R(n)||_{\infty}<\tilde{\delta}$ for any $n\in \mathbb{Z}$.

\emph{Final step:} Let $E$ a compact subset of $\mathbb{R}^{+}$ such that (\ref{LinA}) can be contracted to $E$.
By Proposition \ref{minimal}, we have that $\Sigma(A)\subseteq E$ and the minimality of $\Sigma(A)$
follows.
 
\subsection{Proof of Theorem \ref{Main2}} 
If (\ref{LinA}) has the full spectrum condition, Proposition \ref{bloques} says that (\ref{LinA})
is kinematically similar to 
\begin{displaymath}
y(n+1)=\diag\big(B_{1}(n),\ldots,B_{d}(n)\big)y(n). 
\end{displaymath}

As the matrix above has order $d\times d$, the results follows since is a diagonal matrix.

\appendix
\section{Proof of Lemma \ref{technical-result}}
We will construct a strictly increasing and unbounded sequence $\{N_{p}\}_{p=0}^{+\infty}$ satisfying $N_{0}=0$
such that the sequences $h_{i}$,$\Delta_{i}\colon \mathbb{Z}^{+}\cup \{0\}\to \mathbb{R}$ defined by:
\begin{displaymath}
h_{i}(j)=\left\{\begin{array}{rcl}
                     a_{j} &\textnormal{if} & j\in \{N_{l},\ldots,N_{l+1}-1\}, \quad (l=0,2,4,\ldots)\\\\
                     b_{j} &\textnormal{if} & j\in \{N_{l+1},\ldots,N_{l+2}-1\}
                    \end{array}\right.
\end{displaymath}
and
\begin{displaymath}
\Delta_{i}(j)=\left\{\begin{array}{rcl}
                     -\frac{\delta}{2\ell} &\textnormal{if} & j\in \{N_{l},\ldots,N_{l+1}-1\}, \quad (l=0,2,4,\ldots) \\\\
                     \frac{\delta}{2\ell} &\textnormal{if} & j\in \{N_{l+1},\ldots,N_{l+2}-1\},
                    \end{array}\right.
\end{displaymath}
satisfy properties (\ref{Cota21}) and (\ref{Cota22}) on $\mathbb{Z}^{+}\cup \{0\}$. The case for $\mathbb{Z}^{-}$ is similar
and be donde by the reader.

It is straightforward to see that (\ref{Cota21}) is always satisfied. In order to verify (\ref{Cota22}), we interchange $n$ by $k$ in 
the first inequality of (\ref{CotaKS}), we have:
\begin{equation}
\label{CotaKS2}
\left\{\begin{array}{rcl}
|U(n)U^{-1}(k)| \geq \theta^{k-n}  &\textnormal{if}& n\geq k\\\\
|S(n)S^{-1}(k)| \leq  \theta^{n-k}&\textnormal{if} & n\geq k.
\end{array}\right.
\end{equation}

By using induction, we will verify that for any $\mu>1$, there exists a sequence $\{N_{p}\}_{p}$ such that
\begin{equation}
\label{inegalite}
\frac{1}{\mu}\leq \prod\limits_{j=0}^{n-1}\Big|\frac{c_{i_{rr}}(j)}{h_{i}(j)+\Delta_{i}(j)}\Big|\leq \mu \quad \textnormal{for any $n\geq 0$}.
\end{equation}

If $k=0$ in the first inequality of (\ref{CotaKS2}) we obtain
\begin{displaymath}
|U(n)U^{-1}(0)| \geq \theta^{-n}  , \quad n\geq 0, 
\end{displaymath}
which implies that $|U(n)|$ is unbounded in $\mathbb{Z}^{+}$ since $\theta \in (0,1)$. In consequence, given $\mu>1$, there
exists $N_{1}>0$ such that $|U(n)U^{-1}(0)|<\mu$ for any $n\in \{0,\ldots,N_{1}\}$. In addition, by using the
first inequality of (\ref{CotaKS2}), have that
\begin{equation}
\label{Paso1a}
\frac{1}{\mu}<1\leq |U(n)U^{-1}(0)|<\mu, \quad \textnormal{$n\in \{0,\ldots,N_{1}\}$}.
\end{equation}

On the other hand, by using the second inequality of (\ref{CotaKS2}) and now considering $k=N_{1}$, we obtain that
the sequence $|U(N_{1})U^{-1}(0)||S(n)S^{-1}(N_{1})|$ is convergent to zero when $n\to +\infty$. Then, there exists 
$N_{2}>N_{1}$ such that  $|U(N_{1})U^{-1}(0)||S(n)S^{-1}(N_{1})|\geq 1/\mu$ for any $n\in \{N_{1},\ldots,N_{2}\}$. As before,
by using again the second inequality of (\ref{CotaKS2}) combined with (\ref{Paso1a}), we have that
\begin{equation}
\label{Paso1b}
\frac{1}{\mu}\leq |U(N_{1})U^{-1}(0)||S(n)S^{-1}(N_{1})|<\mu \quad \textnormal{$n\in \{N_{1},\ldots,N_{2}\}$}.
\end{equation}

If $n\in \{N_{1},\ldots,N_{2}\}$, by using our definition of $h_{i}(j)$ and $\lambda_{i}(j)$, we have that,
\begin{displaymath}
\begin{array}{rcl}
\prod\limits_{j=0}^{n-1}\Big|\frac{c_{i_{rr}}(j)}{h_{i}(j)+\Delta_{i}(j)}\Big|&= & \displaystyle \prod\limits_{j=0}^{N_{1}-1}\Big|\frac{c_{i_{rr}}(j)}{a_{i}-\frac{\delta}{2}}\Big| \prod\limits_{j=N_{1}}^{n-1}\Big|\frac{c_{i_{rr}}(j)}{b_{i}+\frac{\delta}{2}}\Big|\\\\
                                                                                     &= & \displaystyle |U(N_{1})U^{-1}(0)||S(n)S^{-1}(N_{1})|\\\\
                                                                                     \end{array}
\end{displaymath}
and (\ref{inegalite}) is verified for any $n\in \{0,\ldots,N_{2}\}$.

As Inductive hypothesis, we will assume that there exists $2m+1$ numbers \linebreak $0=N_{0}<N_{1}<N_{2}<\cdots<N_{2m-1}<N_{2m}$ such that
\begin{displaymath}
\frac{1}{\mu}\leq \prod\limits_{j=0}^{2m-1}|U(N_{j+1})U^{-1}(N_{j})||S(n)S^{-1}(N_{j+1})|\leq \mu \quad \textnormal{for any $n\in \{N_{2m-1},\ldots,N_{2m}\}$}.
\end{displaymath}

By using the first inequality of (\ref{CotaKS2}) and considering $n=2m$ in the inequality above, we have that 
\begin{equation}
\label{ultimopaso}
\Big(\prod\limits_{j=0}^{2m-2}|U(N_{j+1})U^{-1}(N_{j})||S(N_{j+2})S^{-1}(N_{j+1})|\Big)|U(n)U^{-1}(N_{2m})| 
\end{equation}
is unbounded for any $n>N_{2m}$. Then, there exists $N_{2m+1}$ such that this product is lower than $\mu$
for any $n\in \{N_{2m},\cdots,N_{2m+1}\}$. In addition, by inductive hypothesis combined with $|U(n)U^{-1}(N_{2m})|\geq 1$
(ensured by \ref{CotaKS2}), we can see that the product above is lowerly bounded by $1/\mu$.

Finally, we have that 
\begin{displaymath}
\Big(\prod\limits_{j=0}^{2m-1}|U(N_{j+1})U^{-1}(N_{j})||S(N_{j+2})S^{-1}(N_{j+1})|\Big)|S(n)S^{-1}(N_{2m+1})| 
\end{displaymath}
converges to zero when $n\to +\infty$. Then, there exists $N_{2m+2}$ such that  the product above is bigger than $1/\mu$.
As before, by (\ref{ultimopaso}) combined with $|S(n)S^{-1}(N_{2m+1})|\leq 1$ for any $n\geq N_{2m+1}$, we can deduce that
the product above is lower than $\mu$ and (\ref{inegalite}) is proved.

\end{document}